\newtheorem{theorem}{Theorem}
\newtheorem*{theorem*}{Theorem}
\newtheorem{corollary}[theorem]{Corollary}
\newtheorem{lemma}[theorem]{Lemma}
\newtheorem{proposition}[theorem]{Proposition}
\newtheorem{example}[theorem]{Example}
\newtheorem{definition}[theorem]{Definition}
\theoremstyle{definition}
\newtheorem{remark}[theorem]{Remark}
\begin{document}

\begin{center}
{\LARGE A functional characterization of isometries between non-reversible Finsler manifolds}

\bigskip

\textsc{Francisco Venegas M.}

\bigskip \textsc{\today}
\end{center}

\bigskip

\noindent\textbf{Abstract.}
We provide a functional characterization of isometries between non-reversible Finsler manifolds, in the form of a generalization of the Myers-Nakai Theorem for Riemannian manifolds. We show that, since non-reversible Finsler manifolds are a fundamentally asymmetric object, such a result can not be obtained by means of a symmetric function space, and we define the appropriate asymmetric structure needed to describe all possible isometries between this class of manifolds. The result is based on the ideas used in a previous generalization for reversible Finsler manifolds proved in \cite{GJR-13}, in which the normed algebra of $C^1$-smooth Lipschitz functions is used. To reflect the quasi-metric structure of non-reversible Finsler manifolds, this normed algebra had to be modified to include the cone of smooth semi-Lipschitz functions, resulting in a partial loss of the normed space and algebra structures. In order to achieve the desired result, we define new algebraic/quasi-metric structures to model the behavior of the aforementioned function space.
\bigskip

\noindent\textbf{Key words.} 
Finsler manifold; semi-Lipschitz functions; isometries; normed algebra; Banach-Stone type theorem.

\vspace{0.6cm}

\noindent\textbf{AMS Subject Classification} \ \textit{Primary} 54E40, 54C65


\tableofcontents

\section{Introduction}
The classical Myers-Nakai theorem asserts that the Riemannian structure of a Riemannian manifold $M$ is completely determined by the normed algebra structure of the associated space of real-valued, bounded functions of class $C^1$ with bounded derivatives, denoted by $C_b^1(M)$, when endowed with its natural norm $\|f\|=\max\{\|f\|_\infty,\|df\|_\infty\}$. More precisely, the theorem can be stated as follows.

\begin{theorem}[Myers-Nakai Theorem for Riemannian manifolds]\label{myers-nakairiemann}

Let $M$ and $N$ be two connected Riemannian manifolds. Then, $M$ and $N$ are isometric as Riemannian manifolds if and only if the spaces $C_b^1(M)$ and $C_b^1(N)$ are isometrically isomorphic as normed algebras. Moreover, every isometric isomorphism of normed algebras $T:C_b^1(N)\to C_b^1(M)$ must be of the form $Tf=f\circ \tau$, for a Riemannian isometry $\tau:M\to N$.
    
\end{theorem}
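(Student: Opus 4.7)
For the direct implication, given a Riemannian isometry $\tau: M \to N$, the pullback $Tf := f\circ\tau$ is a unital algebra homomorphism with inverse $g\mapsto g\circ\tau^{-1}$; preservation of $\|\cdot\|_\infty$ follows from $\tau$ being a bijection, and preservation of $\|df\|_\infty$ from the chain rule together with the fact that each $d\tau_x$ is a linear isometry between tangent spaces, so that $|d(f\circ\tau)_x|_g = |df_{\tau(x)}|_g$ pointwise.

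For the converse, assume $T: C_b^1(N) \to C_b^1(M)$ is an isometric isomorphism of normed algebras. The first step will be to produce a point map $\tau: M \to N$ via spectra. Each $x\in M$ gives an evaluation character $\delta_x$ on $C_b^1(M)$, and the adjoint $T^*$ maps characters to characters. The crucial lemma to be established is that every character of $C_b^1(M)$ is an evaluation at a point of $M$, i.e.\ there are no ``characters at infinity''. This is a Gelfand-type statement that I would prove by exploiting $C^1$ partitions of unity and the fact that $C_b^1(M)$ separates points from closed sets, so that a hypothetical non-evaluation character would be contradicted by an explicit test function. By symmetry (applying the same argument to $T^{-1}$) this yields a bijection $\tau: M \to N$ with $Tf = f\circ\tau$. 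Continuity of $\tau$ will then be automatic because $Tf\in C(M)$ for every $f\in C_b^1(N)$ and this algebra generates the topology of $N$, so $\tau$ becomes a homeomorphism.

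It remains to upgrade $\tau$ to a Riemannian isometry. The key analytic fact is the Lipschitz--distance formula
\[
d_M(x,y) = \sup\{ f(x) - f(y) : f \in C_b^1(M),\ \|df\|_\infty \le 1 \},
\]
valid on connected Riemannian manifolds because $\|df\|_\infty$ equals the global Lipschitz constant of $f$ for $C^1$ functions. To propagate this formula through $T$, I have to show that $T$ preserves the seminorm $\|df\|_\infty$. The sup-norm $\|f\|_\infty$ is recovered purely from the algebra and norm as the spectral radius $\lim_n\|f^n\|^{1/n}$, hence is automatically preserved; the derivative seminorm $\|df\|_\infty$ is then teased out from the combination of $\|\cdot\|$, $\|\cdot\|_\infty$, and the ring structure, by analysing $\|f-c\mathbf 1\|$ as $c$ varies over constants (which $T$ fixes, being unital). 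Once $T$ is shown to preserve $\|df\|_\infty$, the distance formula implies that $\tau$ is a distance-preserving bijection between connected Riemannian manifolds, and the Myers--Steenrod theorem upgrades this to a smooth Riemannian isometry.

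The two main obstacles I anticipate are (i) the spectral identification in the non-compact case, i.e.\ ruling out ``characters at infinity'' for $C_b^1(M)$ via explicit separating smooth constructions; and (ii) the algebraic-metric extraction of $\|df\|_\infty$ from the $\max$ defining $\|\cdot\|$, which is the genuinely subtle step responsible for transferring the Riemannian data through the abstract algebra isomorphism $T$. Everything else---bijectivity, continuity, the Lipschitz/distance duality and the Myers--Steenrod step---is standard once these two ingredients are in place.
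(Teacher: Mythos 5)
The paper itself does not prove Theorem~\ref{myers-nakairiemann}: it is quoted as a classical result of Myers and Nakai, and the only proof of this type in the text is the one given for the asymmetric analogues (Theorems~\ref{mnasymm} and~\ref{thm:main}), which follows the same overall scheme you propose. Measured against that scheme, your outline contains one step that is actually false and one that would fail as described. The false step is your ``crucial lemma'' that $C_b^1(M)$ has no characters at infinity. For non-compact $M$ this is wrong: on $M=\mathbb{R}$, pick $x_n\to\infty$ and a free ultrafilter $\mathcal{U}$ on $\mathbb{N}$; then $\varphi(f)=\lim_{\mathcal{U}}f(x_n)$ is a nonzero, multiplicative, norm-continuous functional on $C_b^1(\mathbb{R})$, and it is not an evaluation, since a smoothed truncation of $d(x,\cdot)$ separates it from every $\delta_x$. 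No partition-of-unity argument removes these characters; the structure space genuinely contains a corona. What the argument needs, and what the classical proof (and Proposition~\ref{vecnum} here) actually establishes, is an \emph{intrinsic characterization of the evaluations inside the structure space}: they are exactly the characters admitting a countable neighborhood basis for the topology of pointwise convergence. This uses density of the evaluations (Proposition~\ref{dense}) together with completeness and properness of smoothed distance functions, and, being a topological property, it is preserved by the homeomorphism $T^*$, which is how one gets $T^*(\delta(M))=\delta(N)$. Note that this is also where completeness of the manifolds enters, which your outline never invokes.

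The second gap is the extraction of $\|df\|_\infty$ from $\|f\|=\max\{\|f\|_\infty,\|df\|_\infty\}$ by analysing $\|f-c\mathbf{1}\|$ over constants $c$. Subtracting a constant leaves $df$ unchanged but can only lower the sup-norm to half the oscillation of $f$, so for any $f$ whose oscillation exceeds $2\|df\|_\infty$ (for instance $f(x)=\tfrac{1}{100}\arctan x$ on $\mathbb{R}$) the maximum is realized by the sup-norm for \emph{every} $c$ and the derivative seminorm is invisible to this procedure. The known proofs do not show that $T$ preserves $\|df\|_\infty$ function by function; instead they recover the distance \emph{locally}, through the dual norm of differences of evaluation functionals tested against \emph{truncated} distance functions $\min\{d(x,\cdot),1\}$ (this is exactly Lemma~\ref{metric} in the asymmetric setting), for which the sup-norm is at most $1$ and hence does not interfere with the max. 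This gives $d_M(x,y)=d_N(\tau(x),\tau(y))$ for nearby points, one globalizes by computing lengths of curves, and only then does Myers--Steenrod apply. Your forward implication, the use of the spectral radius to recover $\|\cdot\|_\infty$, and the overall architecture are correct, but both of the steps you yourself flag as the main obstacles are resolved in the literature by different mechanisms than the ones you sketch, and the ones you sketch do not work.
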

This result was proved by Myers in \cite{myers1954algebras} for compact manifolds and later by Nakai in \cite{nakai1959algebras} for the general case, and it is closely related to a previous result of Myers and Steenrod (see \cite{MS}), in which it was shown that, between connected Riemannian manifolds, Riemannian isometries coincide with metric isometries when viewing each manifold as a metric space. This result was generalized to Finsler manifolds by Deng and Hou (\cite{DH} and Theorem~\ref{denghou} bellow). 

The Myers-Nakai theorem can be regarded as a \emph{Banach-Stone type theorem}, as the classical Banach-Stone theorem states that compact and Hausdorff topological spaces are completely determined by the Banach space structure of the corresponding spaces of real-valued continuous functions. The term \emph{Banach-Stone type theorem} is commonly used to describe results that assert that a certain structure on a set $X$ is determined by the structure of a suitable space of real-valued functions over $X$. For more examples on Banach-Stone type theorems, we refer the reader to the survey \cite{garrido2002variations}. When viewed as a Banach-Stone type result, the Myers-Nakai theorem establishes an equivalence between \emph{algebra isometries} over the spaces $C_b^1(N)$ and $C_b^1(M)$ with \emph{isometries} between the Riemannian manifolds $M$ and $N$. The richness of the structures involved in this result opens up several avenues of possible extensions. One of these avenues was to extend the result to infinite dimensional Riemannian manifolds, which was done in \cite{GJR-09}. Two other extensions explored in the literature were to replace the equivalence notions between the functions spaces and the underlying manifolds, by normed algebra \emph{isomorphisms} and bi-Lipschitz diffeomorphisms respectively, and to extend the result to a wider class of smooth manifolds. Both results were accomplished in \cite{GJR-13} and \cite{GJR-09} respectively, where the authors focused on special classes of Finsler manifolds, obtaining the following notable results.

\begin{theorem}[Myers-Nakai for quasi-reversible Finsler manifolds, isomorphic version]\label{mnquasi}
  Let $M$ and $N$ be second countable, connected, (forward) complete and quasi-reversible Finsler manifolds. Then, there exists a bi-Lipschitz diffeomorphism between $M$ and $N$ if and only if the spaces $C_b^1(M)$ and $C_b^1(N)$ are isomorphic as normed algebras. Moreover, every normed algebra isomorphism ${T:C_b^1(N)\to C_b^1(M)}$ must be of the form $Tf=f\circ h$, for a bi-Lipschitz diffeomorphism ${h:M\to N}$.  
\end{theorem}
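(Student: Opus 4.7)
The plan is to establish the easier sufficient direction via the canonical composition operator, and then to attack the converse by reconstructing the underlying manifold from the spectrum of the algebra and analyzing the regularity of the induced point map. For the forward implication, given a bi-Lipschitz diffeomorphism $h:M\to N$, I would define $T:C_b^1(N)\to C_b^1(M)$ by $Tf=f\circ h$. The chain rule together with the Lipschitz bound on $h$ gives $\|d(Tf)\|_\infty\leq \mathrm{Lip}(h)\,\|df\|_\infty$, while $\|Tf\|_\infty=\|f\|_\infty$, so $T$ maps $C_b^1(N)$ into $C_b^1(M)$ and is continuous. Since $h^{-1}$ is also a bi-Lipschitz diffeomorphism, the same construction produces a continuous inverse of $T$; the algebra homomorphism property is immediate from composition, so $T$ is an isomorphism of normed algebras.

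For the converse, suppose $T:C_b^1(N)\to C_b^1(M)$ is an isomorphism of normed algebras. The key idea is that points of $M$ should be recovered as characters of $C_b^1(M)$: each $x\in M$ gives the evaluation homomorphism $\delta_x:f\mapsto f(x)$, and the crucial nontrivial claim is that, under the hypotheses of second countability, connectedness, forward completeness, and quasi-reversibility, these evaluations exhaust all continuous nonzero characters of $C_b^1(M)$. This should be proved by an argument analogous to the one in the Riemannian Myers-Nakai setting: bounded $C^1$ functions with bounded derivatives are used to detect sequences converging in the manifold, while forward completeness together with quasi-reversibility (which renders the quasi-metric symmetric up to a multiplicative constant) is used to rule out ``characters at infinity''. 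Once this spectral realization is available, $T$ induces a bijection $h:M\to N$ characterized by $(Tf)(x)=f(h(x))$ for all $f\in C_b^1(N)$ and all $x\in M$.

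It then remains to show $h$ is a bi-Lipschitz diffeomorphism. Continuity of $h$ follows from a standard separation argument using $C^1$ bump functions and the identity $f(h(x))=(Tf)(x)$. For the Lipschitz estimate, I would exploit the operator norm of $T$ by testing against truncated, smoothed versions of the forward quasi-distance functions on $N$; combined with the quasi-reversibility constant, this yields $d_N(h(x),h(y))\leq C\,d_M(x,y)$ for a suitable $C$. Smoothness is obtained by applying $T$ to $C^1$ coordinate functions near $h(x_0)$ and reading off a $C^1$ local representative of $h$; the bi-Lipschitz and diffeomorphism conclusions on the inverse side follow by applying the same analysis to $T^{-1}$. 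The main obstacle is the spectral realization step: constructing enough bounded $C^1$ functions with bounded derivatives to separate points uniformly and detect ``infinity'' on a non-reversible Finsler manifold is delicate, and it is precisely here that quasi-reversibility becomes essential, since without it the standard $C^1$ approximation of the quasi-distance may fail and spurious characters might survive.
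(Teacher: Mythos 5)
Your overall scheme is the right one, and it is essentially the scheme of \cite{GJR-13} for this theorem (the paper itself only quotes Theorem~\ref{mnquasi} and reproduces the scheme, in asymmetric form, to prove Theorem~\ref{mnasymm}): composition operator for the easy direction; a structure space of continuous nonzero characters; an induced point map $h$ with $(Tf)(x)=f(h(x))$; Lipschitz estimates obtained by testing $\|\delta_{h(x_2)}-\delta_{h(x_1)}\|$ against smoothed truncations of the distance functions (cf.\ Lemma~\ref{metric}); smoothness via local charts; and a local-to-global Lipschitz step (cf.\ Proposition~\ref{locslip}).

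However, your ``crucial nontrivial claim'' --- that under the stated hypotheses the evaluations $\delta_x$ exhaust \emph{all} continuous nonzero characters of $C_b^1(M)$ --- is false whenever $M$ is non-compact, and no amount of completeness or quasi-reversibility will rule such characters out. For instance, on $M=\mathbb{R}$ take a free ultrafilter $\mathcal{U}$ on $\mathbb{N}$ and set $\varphi(f)=\lim_{\mathcal{U}}f(n)$: this functional is linear, multiplicative, bounded by $\|f\|_\infty$, and is not an evaluation. What the argument actually needs --- and what \cite{GJR-13} provides, in the form reproduced here as Proposition~\ref{vecnum} --- is an \emph{intrinsic topological characterization} of the evaluations inside the structure space: a character is an evaluation if and only if it admits a countable neighborhood basis for the pointwise (weak$^*$) topology. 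Proving that equivalence is where forward completeness enters (via Hopf--Rinow, to produce a proper $C^1$ Lipschitz function by smoothing $d_M(p,\cdot)$, cf.\ Corollary~\ref{smoothslip}), together with the density of evaluations in the structure space (cf.\ Proposition~\ref{dense}). Since $T^*$ is a homeomorphism between the structure spaces for these topologies, it preserves first countability and therefore carries the set of evaluations on $M$ onto the set of evaluations on $N$, which is all that is needed to define $h$. With that replacement your outline goes through; as written, the spectral realization step would fail.
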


An isometric version of the result was obtained, provided the Finsler manifolds $M$ and $N$ are \emph{reversible}.

\begin{theorem}[Myers-Nakai for reversible Finsler manifolds, isometric version]\label{mnreversible}

Let $M$ and $N$ be second countable, connected, complete and reversible Finsler manifolds. Then, $M$ and $N$ are isometric as Finsler manifolds if and only if $C_b^1(M)$ and $C_b^1(N)$ are isometrically isomorphic as normed algebras. Moreover, every isometric isomorphism of normed algebras ${T:C_b^1(N)\to C_b^1(M)}$ must be of the form $Tf=f\circ h$, for a Finsler isometry $h:M\to N$.
\end{theorem}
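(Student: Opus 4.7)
The ``only if'' direction is immediate: if $h:M\to N$ is a Finsler isometry, then $Tf=f\circ h$ is a bijective algebra homomorphism, $\|f\circ h\|_\infty=\|f\|_\infty$ because $h$ is a bijection, and $\|d(f\circ h)\|_\infty=\|df\|_\infty$ because $h$ intertwines the Finsler structures at the cotangent level; hence $T$ preserves the norm $\max\{\|\cdot\|_\infty,\|d\cdot\|_\infty\}$.

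For the converse, let $T:C_b^1(N)\to C_b^1(M)$ be an isometric isomorphism of normed algebras. Since every reversible Finsler manifold is quasi-reversible, I would invoke Theorem~\ref{mnquasi} to produce a bi-Lipschitz diffeomorphism $h:M\to N$ with $Tf=f\circ h$, reducing the task to upgrading $h$ to a Finsler isometry. The crux is to extract, from the preserved algebra norm, the equality $\|d(f\circ h)\|_\infty=\|df\|_\infty$ for every $f\in C_b^1(N)$. Because $h$ is a bijection one already has $\|f\circ h\|_\infty=\|f\|_\infty$, so the isometric condition only gives
\[
\max\{\|f\|_\infty,\|df\|_\infty\}=\max\{\|f\|_\infty,\|d(f\circ h)\|_\infty\},
\]
which directly forces $\|df\|_\infty=\|d(f\circ h)\|_\infty$ when the derivative norm dominates the sup-norm, but not in general.

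To recover the equality in full generality, my plan is to localize the test functions. Given $p\in N$ and $\alpha\in T_p^*N$, one constructs a family of $C^1$ bump functions $f_\varepsilon$ supported in a ball of radius $\varepsilon$ about $p$, with $df_\varepsilon(p)=\alpha$, $\|df_\varepsilon\|_\infty\to F_N^*(p,\alpha)$, and $\|f_\varepsilon\|_\infty=O(\varepsilon)$ (by a Finsler mean-value estimate). For $\varepsilon$ small the derivative norm dominates, so the collapsed identity applies to each $f_\varepsilon$, and passing to the limit yields $F_N^*(p,\alpha)=F_M^*(q,\alpha\circ dh_q)$ with $q=h^{-1}(p)$. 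By Finsler/co-Finsler duality (valid under reversibility), this forces $dh_q:T_qM\to T_pN$ to be a linear isometry for every $q$, whence $h$ is a Finsler isometry.

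A conceptually cleaner alternative is first to deduce that $h$ is a metric isometry between the induced Finsler metrics $d_M$ and $d_N$ and then invoke Deng-Hou's Theorem~\ref{denghou} to conclude that $h$ is a Finsler isometry. For this route one combines the equality $\|df\|_\infty=\|d(f\circ h)\|_\infty$ (established as above, or by a multiplicative trick applied to powers $(\lambda f)^n$ with $|\lambda|\,\|f\|_\infty<1$ and $n\to\infty$, so that the derivative term dominates the sup-norm) with the reversible duality formula
\[
d_N(x,y)=\sup\bigl\{\,|f(x)-f(y)|:f\in C_b^1(N),\ \|df\|_\infty\le 1\,\bigr\},
\]
which holds thanks to completeness and second countability via smooth McShane-type approximations of $1$-Lipschitz functions. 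I expect the main technical obstacle in either route to be precisely this localization/approximation step — either constructing bump functions with sharp control on the Finsler derivative, or justifying the smooth $C^1$-Lipschitz duality formula — both of which depend crucially on reversibility and completeness.
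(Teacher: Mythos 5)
First, a point of orientation: the paper does not actually prove Theorem~\ref{mnreversible} --- it is quoted as background from \cite{GJR-13} --- so the fair comparison is with the proof of Theorems~\ref{mnasymm} and \ref{thm:main}, which subsume it (specialize to reversible $F$, where $\mathcal{A}(\mathcal{X})=C_b^1(\mathcal{X})$ and semi-Lipschitz means Lipschitz). Measured against that argument, your proposal correctly isolates the crux --- that preserving $\max\{\|f\|_\infty,\|df\|_\infty\}$ does not by itself force preservation of $\|df\|_\infty$ --- but resolves it by a genuinely different, infinitesimal route: bump functions $f_\varepsilon$ with $df_\varepsilon(p)=\alpha$, vanishing sup-norm and derivative norm tending to $F_N^*(p,\alpha)$, so that the derivative term dominates the algebra norm and a limit yields $F_N^*(p,\alpha)\geq F_M^*(q,\alpha\circ dh_q)$ (note that a single limit gives only this one inequality; you must run the same argument for $T^{-1}$ to get equality). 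This is workable but delicate: a cutoff supported at scale $\varepsilon$ contributes a derivative term of order $\|f_\varepsilon\|_\infty/\varepsilon$, which is comparable to $F_N^*(p,\alpha)$ and does not vanish unless the cutoff is scaled anisotropically relative to the linear part. The paper's mechanism is instead your second route, but with a different duality that sidesteps the issue entirely: it tests the \emph{full} dual norm $\|\delta_y-\delta_x|^*$ against smooth approximations of the \emph{truncated} distance functions $\min\{d_\mathcal{X}(x,\cdot),1\}$ (Corollary~\ref{smoothslip}), which have both sup-norm and (semi-)Lipschitz constant at most $1$; this gives $\min\{d_\mathcal{X}(x,y),1\}\leq\|\delta_y-\delta_x|^*\leq d_\mathcal{X}(x,y)$ (Lemma~\ref{metric}), hence a \emph{local} $1$-Lipschitz estimate for $h$ from $\|T^*|=1$, which is upgraded to a global one by Proposition~\ref{locslip} and closed by Deng--Hou (Theorem~\ref{denghou}).

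Two concrete defects in your second route as written. The duality formula $d_N(x,y)=\sup\{|f(x)-f(y)|:\|df\|_\infty\leq 1\}$ is not usable where you place it: an algebra isometry is only known to preserve the full norm $\max\{\|f\|_\infty,\|df\|_\infty\}$, not the constraint set $\{\|df\|_\infty\leq 1\}$, so invoking this formula presupposes exactly the derivative-norm preservation you are trying to establish --- the truncation at $1$ in Lemma~\ref{metric} is precisely what repairs this. Second, the multiplicative trick with powers $(\lambda f)^n$ does not recover $\|df\|_\infty$: one has $\|d(f^n)\|_\infty=n\,\|f^{n-1}df\|_\infty$, and $\sup_x |f(x)|^{n-1}\|df(x)|$ concentrates as $n\to\infty$ near the points where $|f|$ is maximal, which are typically critical points of $f$; knowing these quantities for all $n$ gives no control on $\sup_x\|df(x)|$. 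So either commit fully to the bump-function route (with the anisotropic localization made precise) or adopt the truncation-plus-local-to-global scheme of the paper.
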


These results were possible due to the fact that, for quasi-reversible Finsler manifolds (which include reversible Finsler manifolds and Riemannian manifolds), real-valued smooth functions have bounded derivative if and only if they are Lipschitz, which implies the algebra $C_b^1(M)$ can be seen as a subspace of $\mathrm{LIP}(M)$ (the space of real-valued Lipschitz functions over $M$), endowed with the norm  $\|f\|=\max\{\|f\|_\infty ,\|f\|_{\mathrm{LIP}}\}$, where $\|f\|_{\mathrm{LIP}}$ denotes the Lipschitz constant of the function $f$. 

The aim of this work is to provide a further generalization of both Theorems~\ref{mnquasi} and \ref{mnreversible} to the case of general (possibly asymmetric) Finsler manifolds. This poses a substantial challenge because, as we shall see in forthcoming Example~\ref{counterexample}, the normed algebra $C_b^1(M)$ is not well suited to describe isometries between non-reversible Finsler manifolds. The question then becomes: \emph{can we find a suitable algebraic structure and a function space $\mathcal{A}(\mathcal{X})$ that characterizes the structure of a general Finsler manifold $\mathcal{X}$?} In other words, we wish to define $\mathcal{A}(\mathcal{X})$, with a structure allowing for a result of the following type.
\begin{theorem}[Desired result]\label{desiredresult}
Let $(\mathcal{X},F_\mathcal{X})$ and $(\mathcal{Y},F_{\mathcal{Y}})$ be connected, second countable and forward complete Finsler manifolds. Then, $(\mathcal{X},F_\mathcal{X})$ and $(\mathcal{Y},F_{\mathcal{Y}})$ are Finsler isometric (respectively isomorphic) if and only if $\mathcal{A}(\mathcal{X})$ and $\mathcal{A}(\mathcal{Y})$ are isometrically isomorphic (respectively isomorphic). Moreover, every isomorphism (respectively isometry) between $\mathcal{A}(\mathcal{Y})$ and $\mathcal{A}(\mathcal{X})$ is of the form $Tf=f\circ h$, where $h:\mathcal{X}\to \mathcal{Y}$ is a Finsler isomorphism (respectively isometry).
\end{theorem}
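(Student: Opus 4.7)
The plan is to adapt the Gelfand-style strategy underlying Theorems~\ref{mnquasi} and \ref{mnreversible} to the asymmetric setting. On a non-reversible Finsler manifold the natural Lipschitz-type objects are the \emph{smooth semi-Lipschitz} functions: $C^1$-smooth $f:\mathcal{X}\to\R$ satisfying
\[
\|f\|_{\SL}=\sup_{x\neq y}\frac{(f(y)-f(x))^+}{\dista(x,y)}<\infty,
\]
where $\dista$ is the asymmetric Finsler quasi-distance. These form only a cone, closed under addition and multiplication by nonnegative scalars. I would therefore take $\mathcal{A}(\mathcal{X})$ to be the cone of bounded smooth semi-Lipschitz functions, enriched by whatever auxiliary operations (pointwise product, a partial order, perhaps a selected notion of difference) are needed for the recovery argument, and equipped with the asymmetric quasi-norm $\|f\|=\max\{\|f\|_\infty,\|f\|_{\SL}\}$, which in turn induces a quasi-metric on $\mathcal{A}(\mathcal{X})$.

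For the easy implication, one checks directly that a Finsler isomorphism (respectively isometry) $h:\mathcal{X}\to\mathcal{Y}$ yields a composition operator $Tf=f\circ h$ that preserves all of the chosen structure. This uses the Deng--Hou extension of the Myers--Steenrod theorem, which identifies Finsler isometries with quasi-metric isometries for $\dista$, together with standard verifications that pullback by a diffeomorphism preserves smoothness, boundedness, and the cone of semi-Lipschitz functions (with multiplicative bi-Lipschitz constants in the non-isometric case).

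The converse is the core of the argument and I would divide it into three steps. First, \emph{recover the points}: every suitable endomorphism of $\mathcal{A}(\mathcal{X})$ must be shown to be a point evaluation, giving a canonical bijection $h:\mathcal{X}\to\mathcal{Y}$ with $Tf=f\circ h$. Because $\mathcal{A}$ is only a cone, classical Gelfand theory is not directly available; one needs the right analog of ``character'', presumably as maximal cone-multiplicative functionals, or by passing to the vector space $\mathcal{A}(\mathcal{X})-\mathcal{A}(\mathcal{X})$ of smooth Lipschitz functions and proceeding as in \cite{GJR-09,GJR-13}. Forward completeness and second countability enter here exactly as in those works, to rule out ``ideal points at infinity''. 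Second, \emph{recover the quasi-metric}: from the de Leeuw-type duality
\[
\dista(x,y)=\sup\{f(y)-f(x):f\in\mathcal{A}(\mathcal{X}),\ \|f\|_{\SL}\leq 1\},
\]
one reads that a cone-isomorphism (resp.\ isometric cone-isomorphism) forces $h$ to be bi-Lipschitz (resp.\ a quasi-metric isometry). Third, \emph{upgrade $h$}: smoothness is obtained by pushing local chart functions through $T$ as in \cite{GJR-13}, and the final step ``quasi-metric isometry $\Rightarrow$ Finsler isometry'' is supplied by Deng--Hou.

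The main obstacle I foresee is purely algebraic: designing the ``cone-quasi-normed-algebra'' formalism on $\mathcal{A}(\mathcal{X})$ so that it is simultaneously rich enough to recover the full Finsler data and restrictive enough that its endomorphisms must be composition operators. Since $-f$ generally does not belong to the cone, the standard maneuvers (spectra, resolvents, splitting into positive and negative parts) break down, and the classical characterization of characters must be re-derived from scratch. Crafting the correct substitute structure, and showing that forward-completeness is the precise hypothesis that makes it behave as expected, is exactly the ``new algebraic/quasi-metric structure'' promised in the abstract; every step outlined above is ultimately a corollary of getting this formalism right.
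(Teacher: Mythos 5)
Your high-level strategy coincides with the paper's: structure space of multiplicative functionals to recover points (with forward completeness and second countability, via Hopf--Rinow and proper smooth semi-Lipschitz functions, ruling out points at infinity), a de~Leeuw-type duality to recover the quasi-metric, smoothness of $h$ by pushing chart functions through $T$, and Deng--Hou to pass from quasi-metric isometry to Finsler isometry. The easy direction is also handled the same way. However, you explicitly defer the one step that carries the actual content of the theorem --- the construction of $\mathcal{A}(\mathcal{X})$ and of the morphism notion for which isomorphisms are forced to be composition operators --- and the concrete suggestion you do make does not work. The cone of \emph{all} bounded $C^1$ semi-Lipschitz functions equipped with pointwise product is not a viable structure: that cone is not closed under multiplication (the constant $-1$ lies in it, and multiplying by it sends $f$ to $-f$, which need not be semi-Lipschitz when the index of symmetry is $0$, as in Example~\ref{finslerindex0}). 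Your fallback of ``passing to the vector space $\mathcal{A}(\mathcal{X})-\mathcal{A}(\mathcal{X})$ and proceeding as in \cite{GJR-09,GJR-13}'' is precisely what Example~\ref{counterexample} rules out: once you forget which elements lie in the cone and what their asymmetric norms are, you are back to a symmetric normed algebra of $C^1_b$ type, and its isomorphism class cannot distinguish $(\mathbb{R},|\cdot|)$ from a non-reversible Finsler structure on $\mathbb{R}$.

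The paper's resolution, which is the missing idea, has three ingredients. First, restrict to the \emph{non-negative} bounded smooth semi-Lipschitz functions $SC_b^1(\mathcal{X})_+$; positivity is exactly what makes the cone multiplicatively closed with $\|fg|\leq 2\|f|\,\|g|$ (Proposition~\ref{sc+mult}). Second, take the linear span of this cone, which is an honest algebra, so that classical Gelfand-style characters (non-zero, linear, multiplicative, forward upper semicontinuous functionals) are available without inventing a ``cone-character'' theory from scratch. Third, equip the span with an \emph{extended} asymmetric hemi-norm taking the value $+\infty$ off the cone, so that the norm retains the asymmetric information and an isometry of this structure must preserve the cone and the semi-Lipschitz constants exactly. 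With that object in hand, your remaining steps go through essentially as you outline, with one technical caveat you gloss over: because the unit ball of $\mathcal{A}(\mathcal{X})$ also constrains the sup-norm, the duality only recovers the \emph{truncated} distance $\min\{d_{\mathcal{X}}(x,y),1\}$ (Lemma~\ref{metric}), and one must localize to small symmetrized balls and invoke the local-to-global principle for semi-Lipschitz maps (Proposition~\ref{locslip}) to conclude that $h$ is globally bi-semi-Lipschitz with constants $\|T|$ and $\|T^{-1}|$.
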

In order to accomplish this, we will approach Finsler manifolds from the perspective of \emph{quasi-metric spaces}, that is, sets endowed with asymmetric distances. We shall see in Section~\ref{secprelim} that an appropriate notion of morphism between Finsler manifolds are smooth \emph{semi-Lipschitz} functions, while giving meaning to the phrase ``$\mathcal{A}(\mathcal{X})$ and $\mathcal{A}(\mathcal{Y})$ are isometrically isomorphic'' (or just isomorphic) is part of the task at hand. Proposition~\ref{normaderivadaslip} reinforces the choice of the quasi-metric framework, as it allows us to see the space of $C^1$-smooth functions with bounded derivative (with respect to Definition~\ref{asymnormder}) as a subset of $\mathrm{SLIP}(\mathcal{X})$ (the set of real-valued \emph{semi-Lipschitz} functions over $\mathcal{X}$), for any connected Finsler manifold $\mathcal{X}$. This observation provides a good starting point to search for the desired function space $\mathcal{A}(\mathcal{X})$, but at the same time points to an imminent obstacle, as the set $\mathrm{SLIP}(\mathcal{X})$ often fails to be a linear space, which is a consequence of the inherent asymmetric nature of non-reversible Finsler manifolds.  In order to work within the algebraic constraints of $\mathrm{SLIP}(\mathcal{X})$, new asymmetric structures had to be defined, which are presented in Section~\ref{secalg}. Due to the fact that the setting of quasi-metric spaces is not of common knowledge, we present in Section~\ref{secprelim} a brief review of the subject, containing the needed preliminary definitions and results.
\section{Preliminaries}\label{secprelim}
\subsection{Finsler manifolds}\label{prefinsler}
Finsler manifolds can be regarded as a generalization of Riemannian manifolds in which the tangent spaces are allowed to have a weaker notion of norm.

Let $E$ be a real vector space of finite dimension. A function $F:E\to [0,+\infty)$ is called a \emph{Minkowski norm} on $E$ if it satisfies:
		\begin{enumerate}
			\renewcommand\labelenumi{(\roman{enumi})}
			\item  $F(\lambda v)=\lambda F(v)$ for every $v\in E$ and $\lambda \geq 0$. 
			\item $F$ is continuous on $E$ and $C^\infty$-smooth on $E\setminus \{0\}$.
			\item For each $v\in E\setminus \{0\}$, the quadratic form  defined by $g_v=\tfrac{1}{2}d^2[F^2](v)$ is positive definite on $E$.
		\end{enumerate}
Additionally, every Minkowski norm satisfies the following conditions (see \cite[Theorem~1.2.2]{BCS} \emph{e.g.}):
\begin{enumerate}
 			\item[(iv)] $F(v)=0$ if and only if $v=0$. 
			\item[(v)] $F(u+v)\leq F(u)+F(v)$, for all $u,v\in E$.
\end{enumerate}
In general, Minkowski norms do not need to be symmetric. There are very interesting examples of spaces that utilize asymmetric Minkowski norms, such as Randers spaces (see \cite{BCS}) and Finsler manifolds. 

	\begin{definition}[Finsler manifold]\label{def-Finsler mnfd}
		A Finsler manifold is a pair $(\mathcal{X},F)$, where $\mathcal{X}$ is a finite-dimensional $C^\infty$-smooth manifold and $F:T\mathcal{X}\to [0,\infty)$ is a continuous function defined on the tangent bundle $T\mathcal{X}$, satisfying 
		\begin{enumerate}
			\renewcommand\labelenumi{(\roman{enumi})}
			\item $F$ is $C^\infty$-smooth on $T\mathcal{X}\setminus\{0\}.$ 
			\item For every $x\in \mathcal{X}$, the function $F(x,\cdot):T_x\mathcal{X}\to [0,\infty)$ is a Minkowski norm on the tangent space $T_x\mathcal{X}$.
		\end{enumerate}
	\end{definition}

		The function $F$, often called the \emph{Finsler structure}, is said to be \emph{reversible} if  $F(x,\cdot)$ is symmetric for every $x\in \mathcal{X}$. This notion generalizes Riemannian manifolds, as every such manifold is a reversible Finsler manifold, where the Minkowski norm on each tangent space is induced by an inner product. 
  \begin{definition}
		Let $(\mathcal{X},F)$ be a connected Finsler manifold. The \emph{Finsler distance} $d_F$ on $\mathcal{X}$ is defined by
		$$d_F(x,y)=\inf\{\ell_F(\sigma)\,:\,\sigma \text{ is a piecewise }C^1\text{ path from }x\text{ to }y\},$$
		where the Finsler length of a piecewise $C^1$ path $\sigma:[a,b]\to \mathcal{X}$ is defined as:
		$$\ell_F(\sigma)=\int_a^bF(\sigma(t),\dot{\sigma}(t))dt.$$
	\end{definition}
	
\begin{definition}[Finsler isometry]
	A mapping $\tau:(\mathcal{X},F)\to(\mathcal{Y},G)$ between Finsler manifolds is said to be a \emph{Finsler isometry} if it is a diffeomorphism which preserves the Finsler structure, that is, for every $x\in \mathcal{X}$ and every $v\in T_x\mathcal{X}$:
	$$F(x,v)=G(\tau(x),d\tau(x)(v)).$$
\end{definition}
We recall the following theorem of Deng and Hou \cite{DH}, which extends the result of Myers and Steenrod \cite{MS} from the Riemannian setting to the context of Finsler manifolds:

\begin{theorem}\label{denghou}
	Let $(\mathcal{X},F)$ and $(\mathcal{Y},G)$ be connected Finsler manifolds, and let ${\tau:(\mathcal{X},F)\to(\mathcal{Y},G)}$ be a bijection. Then, $\tau$ is a Finsler isometry if and only if it is an isometry for the corresponding Finsler distances, that is, for any $p,q\in \mathcal{X}$, we have
 $$d_F(p,q)=d_G(\tau(p),\tau(q)).$$
\end{theorem}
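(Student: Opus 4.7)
The plan is to handle the two implications separately. For the forward implication (Finsler isometry implies metric isometry), the argument is immediate from the definitions. If $\tau:\mathcal{X}\to\mathcal{Y}$ is a diffeomorphism satisfying $F(x,v)=G(\tau(x),d\tau(x)(v))$ for every $(x,v)\in T\mathcal{X}$, then by the chain rule every piecewise $C^1$ curve $\sigma:[a,b]\to\mathcal{X}$ has image $\tau\circ\sigma$ with
$$\ell_G(\tau\circ\sigma)=\int_a^b G(\tau(\sigma(t)),d\tau(\sigma(t))(\dot\sigma(t)))\,dt=\int_a^b F(\sigma(t),\dot\sigma(t))\,dt=\ell_F(\sigma).$$
Because $\tau$ is a diffeomorphism (and $\tau^{-1}$ inherits the same property), the correspondence $\sigma\mapsto\tau\circ\sigma$ is a bijection between piecewise $C^1$ paths from $p$ to $q$ in $\mathcal{X}$ and those from $\tau(p)$ to $\tau(q)$ in $\mathcal{Y}$; taking infima gives $d_F(p,q)=d_G(\tau(p),\tau(q))$.

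For the converse, the strategy is to first establish that $\tau$ is $C^\infty$-smooth and then recover the pointwise identity of Finsler structures by an infinitesimal limit. The key analytic tool is the Busemann--Mayer formula, which asserts that for any smooth curve $\gamma:[0,\varepsilon)\to\mathcal{X}$ with $\gamma(0)=x$ and $\dot\gamma(0)=v$,
$$F(x,v)=\lim_{t\to 0^+}\frac{d_F(x,\gamma(t))}{t}.$$
Assuming smoothness of $\tau$ has been secured, applying this formula to $\gamma$ in $\mathcal{X}$ and to $\tau\circ\gamma$ in $\mathcal{Y}$, together with the hypothesis $d_G(\tau(x),\tau(\gamma(t)))=d_F(x,\gamma(t))$, yields $F(x,v)=G(\tau(x),d\tau(x)(v))$ at every $(x,v)\in T\mathcal{X}$, exactly the Finsler isometry property.

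The main obstacle, and the substantive content of the Deng--Hou argument, is proving that $\tau$ is smooth in the first place. I would proceed as follows. First, since the Finsler distance (or rather its symmetrization) induces the manifold topology on a connected Finsler manifold, $\tau$ is automatically a homeomorphism. Second, a forward-distance-preserving bijection sends forward-minimizing geodesic segments to forward-minimizing geodesic segments, because such segments admit a purely metric characterization as length-realizing curves. Third, forward geodesics emanating from a fixed point $x$ are parametrized smoothly by the Finsler exponential map $\exp_x$, which is a local $C^\infty$-diffeomorphism from a neighborhood of $0\in T_x\mathcal{X}$ onto a neighborhood of $x$. Transferring this parametrization through $\tau$ allows one to write $\tau$ near $x$ as $\exp_{\tau(x)}\circ L_x\circ \exp_x^{-1}$ for some bijection $L_x:U\subset T_x\mathcal{X}\to T_{\tau(x)}\mathcal{Y}$.

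The delicate remaining step, which I expect to be the hardest, is to show that $L_x$ extends to a linear map and depends smoothly on $x$. In the Riemannian case this is Myers--Steenrod's classical averaging argument over the unit sphere; in the Finsler case the asymmetry of $F$ forces one to work consistently with forward exponentials, forward distance balls, and forward tangent cones, and the absence of an inner product on the tangent spaces means the linearity of $L_x$ must be extracted from the behavior of $\tau$ along families of geodesics rather than from orthogonality considerations. Once $L_x$ is shown to be linear and to vary smoothly, the local formula above gives smoothness of $\tau$, completing the reduction to the Busemann--Mayer step above.
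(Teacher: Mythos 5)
The paper does not prove this statement: it is quoted verbatim as a known result of Deng and Hou, with the reference \cite{DH} standing in for the proof. So there is no internal argument to compare yours against; what follows is an assessment of your proposal on its own terms.

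Your forward implication is complete and correct, and your overall architecture for the converse (topological continuity from the induced topology, metric characterization of minimizing geodesics, conjugation through exponential maps, then the Busemann--Mayer formula $F(x,v)=\lim_{t\to 0^+}d_F(x,\gamma(t))/t$ to recover $F$ once $\tau$ is known to be $C^1$) is exactly the Myers--Steenrod/Deng--Hou route. However, the proposal has a genuine gap, and you flag it yourself: the linearity of $L_x=\exp_{\tau(x)}^{-1}\circ\tau\circ\exp_x$ and its smooth dependence on $x$ are asserted as ``the delicate remaining step'' but never carried out, and this is where essentially all of the content of the theorem lives. In the non-reversible Finsler setting one cannot fall back on the Riemannian averaging/orthogonality argument; Deng and Hou have to extract linearity from the behavior of $\tau$ along geodesic variations and from the preservation of the Minkowski norms, and this is a substantive argument, not a routine verification. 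A second, unaddressed technical obstruction is your claim that $\exp_x$ is ``a local $C^\infty$-diffeomorphism from a neighborhood of $0\in T_x\mathcal{X}$'': for a genuinely Finslerian metric the exponential map is $C^\infty$ only away from the zero section and merely $C^1$ at $0$ (with derivative the identity there), so the composition $\exp_{\tau(x)}\circ L_x\circ\exp_x^{-1}$ does not immediately yield smoothness of $\tau$ at $x$; one must argue separately (e.g., by varying the base point) to upgrade the regularity. As it stands, the proposal is a correct roadmap with the two hardest waypoints left unvisited.
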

 Theorem \ref{denghou} suggests that, if one wishes to understand the structure of connected Finsler manifolds, there is much to be gained by studying them as \emph{quasi-metric spaces}.
\subsection{Quasi-metric spaces}\label{quasimetricspaces}
\begin{definition}[Quasi-metric space]\label{defquasimetric}
A \emph{quasi-metric space} is a pair $(X,d)$, where $X\neq\emptyset$ and $${d:X\times X\to [0,\infty)}$$
is a function, called {\em quasi-metric}, satisfying: 
\begin{enumerate}
\renewcommand\labelenumi{(\roman{enumi})}
\leftskip .35pc
     \item $\forall x,y,z\in X$: $d(x,y)\leq d(x,z)+d(z,y)$ \emph{(triangular inequality)};
     \item\label{separation} $\forall x,y\in X$: $x=y$ $\iff$ $d(x,y)=0$.
\end{enumerate}
\end{definition}
\noindent Unlike metrics, quasi-metrics are not required to be symmetric, that is, we may have that ${d(x,y)\neq d(y,x)}$ for some $x,y\in X$. If we replace condition~$\rm{(ii)}$ with
$$
{\rm (ii)' }\qquad x=y\quad\iff\quad \begin{cases}
\ d(x,y)=0\\[0.15cm]
\ d(y,x)=0
\end{cases}
$$
then we say that $d$ is a {\em quasi-hemi-metric}, in which case the distance between two different points is allowed to be $0$, as long as the distance in the ``opposite orientation" is strictly positive. The terminology of quasi-metric space will refer to a pair $(X,d)$ where $d$ is either a quasi-metric or a quasi-hemi-metric. In this work we shall also consider \emph{extended quasi-hemi-metrics} ${\tilde{d}:X\times X\to [0,+\infty]}$, that is, distance functions satisfying conditions $\rm (i)$ and $\rm (ii)'$, but which are also allowed to take the value $+\infty$. It is a well known fact that the Finsler distance $d_F$ of a connected Finsler manifold is always a quasi-metric (see \emph{e.g.} \cite[Section~6.2]{BCS}).\\

The reader should be alerted that the term {\em quasi-metric} is not standard. This term has been used by other authors to refer to other types of generalized distances. For example, in \cite{paluszynski2009quasi} and \cite{macias1979lipschitz}, the authors consider a relaxation of the triangular inequality while still requiring symmetry, while in \cite{alvarado2015hardy}, both the triangular inequality and symmetry are relaxed by constant factors. In this work, we will limit ourselves to Definition~\ref{defquasimetric}, as it is better suited to model the behavior of non-reversible Finsler manifolds.\\

Every  quasi-metric space $(X,d)$ can be endowed with a topology, known as the \emph{forward topology}, denoted by $\mathcal{T}(d)$,  generated by the family of open \emph{forward}-balls $\{B_{d}(x,r)\hbox{\rm :}\ x\in X, r>0\}$, where
$$B_{d}(x,r):=\{y\in X\hbox{\rm :}\ d(x,y)<r\}.$$

It is worth mentioning that other topologies can be defined on $X$ using the quasi-metrics \\${\bar{d}(x,y)=d(y,x)}$ and ${d^s(x,y)=\max\{d(x,y),d(y,x)\}}$ (which is a metric in the usual sense), and the same holds for extended quasi-metric spaces. For more information regarding these topologies, we refer the reader to \cite[Section 2]{daniilidis2021asymmetric}.

\begin{remark}[Topology of a Finsler manifold]
	For a connected Finsler manifold $(\mathcal{X},F)$, the forward topology associated with its Finsler distance $d_F$ coincides with the manifold topology (see \cite[Chapter~6.2]{BCS}). In particular, the forward topology of a connected Finsler manifold is always Hausdorff. 
\end{remark}

\begin{definition}[Forward completeness]\label{forwardcomp}
A sequence $(x_n)$ in a quasi-metric space $(X,d)$ is said to be \emph{forward-Cauchy} if for every $\varepsilon>0$, there exists $n_0\in \mathbb{N}$ such that $d(x_n,x_m)<\varepsilon$ whenever $n_0\leq n\leq m$. A space where every forward-Cauchy sequence is forward-convergent is called \emph{forward complete}.
\end{definition}

As it is the case in the classical theory of metric spaces, the special case when the distance function is associated with a norm deserves to be addressed on its own right. 
\begin{definition}[Asymmetric normed space]\label{defasymmnorm}
For a real linear space $E$, an {\em asymmetric hemi-norm} is a function $\|\cdot|:E\to\mathbb{R}_+$ that verifies triangular inequality, positive homogeneity and the following separation property:
\begin{equation*}\label{eq:Separation-heminorm}
{\rm (ii)' }\quad x=0\quad\iff\quad \begin{cases}\
\|x|=0\\[0.15cm]
\ \|\!-\!\!x|=0
\end{cases}
\end{equation*}
A vector space endowed with an asymmetric hemi-norm will be called an \emph{asymmetric normed space}.
\end{definition}
The symbol $\|\cdot\vert$, using two vertical bars on the left and only one bar on the right side, serves as a reminder of the asymmetric nature of these type of functions, in the sense that the values $\|x\vert$ and $\| -x\vert$ may not coincide. We may also consider, keeping the same notation, {\it extended asymmetric hemi-norms}, allowing $\|\cdot|$ to take the value $+\infty$. Clearly, any extended asymmetric normed space can be endowed with the extended quasi-hemi-metric 
\begin{equation}\label{qnqd}
d(x,y):=\|y-x|,\quad\text{for all } x,y\in X.
\end{equation}

 The notion of asymmetric norm can be generalized further by allowing asymmetries in an algebraic sense, which will be done by considering cones instead of linear spaces.

\begin{definition}[Cone] \label{defcone}\em
 A subset $C$ of a real linear space $E$ will be called a \emph{cone} if it is closed under finite sums and under multiplication by non-negative scalars. In other words, for all $x,y\in C$ and $r\geq 0$, we have $ x+y\in C$ and $r\cdot x\in C$.

In particular, under this definition, every cone is a convex subset of $E$ containing the origin. A \emph{subcone} of a cone $C$ will be any set that is a cone itself, and a \emph{linear map} on a cone $C$ will be the restriction of a linear map (with values on some linear space $F$) on the linear space $\mathrm{span}(C)=C-C\subseteq E$.
\end{definition}

We will consider cones endowed with an asymmetric norm, as follows.

\begin{definition}[Conic norm] \label{defnorm}\em
A {\em conic hemi-norm} on a cone $C$ is a function $\|\cdot|\hbox{\rm :}\ C\to \mathbb{R}_{+}$ such that for all
$x,y\in C$ and $r\geq 0$:\smallskip
\begin{enumerate}
\renewcommand\labelenumi{(\roman{enumi})}
\leftskip .35pc
\item $\|x+y|\leq \|x|+\|y|$,  for all $x,y \in C$ \smallskip
\item $ x=0\iff -x\in C$ and $\|-x|=0=\|x|$ \smallskip
\item $\|r\cdot x|=r\|x|$,  for all $x\in C$ and $r \geq 0$. \vspace{-.4pc}\smallskip
\end{enumerate}
\medskip
The pair $(C,\|\cdot|)$ is called \emph{normed cone}.
\end{definition}
\begin{remark}\em
Asymmetric normed spaces (as per Definition \ref{defasymmnorm}) are a particular case of normed cones.
\end{remark}

\subsection{Semi-Lipschitz functions}\label{subsecslip}
We start this subsection with a brief discussion on the importance of the quasi-metric space $(\mathbb{R},d_u)$, which will play a key role in the definition of real valued semi-Lipschitz functions. Consider the asymmetric hemi-norm $u(x)=\max\{x,0\}$ and the associated quasi-hemi-metric $d_u$ is given by 
\begin{equation}\label{du}
d_u(x,y)=u(y-x)=\max\{y-x,0\}\quad\text{for every } x,y\in\mathbb{R}.
\end{equation}

The reader should mind the order of the variables $x$ and $y$ on~\eqref{du}. Since $u$ is an asymmetric hemi-norm, the order in which we write the difference between $y$ and $x$ (as opposed to $x-y$) does matter. We chose this order because it aligns with the notion that, in a (symmetric) normed linear space, the norm of a vector $v$ represents the length of an arrow going from the origin to the position of $v$, that is, $d(0,v)=\|v-0\|$.

The forward topology of this quasi-metric space characterizes the usual notion of upper semicontinuity for real-valued functions (see \cite{daniilidis2021asymmetric}), while the \textit{backward} topology, defined using $\bar{d_u}(x,y)=d_u(y,x)$, holds the same property for lower semicontinuous functions. Therefore, in order to define real valued semi-Lipschitz functions, we must make a choice regarding the ``orientation'' of the quasi-metric on $\mathbb{R}$, which will impact whether the resulting functions are upper or lower semicontinuous, as well as the computation of the \textit{semi-Lipschitz constants}. As for the general definition of semi-Lipschitz functions, it comes as a straightforward generalization of the usual definition of Lipschitz functions between metric spaces. 
\begin{definition}\label{sLipfunction}
Let $(X,d)$ and $(Y,\rho)$ be quasi-metric spaces. A function $f:X\to Y$ is said to be \emph{semi-Lipschitz} if there exists a constant $L\geq 0$ such that for every $x,y\in X$ we have
\begin{equation}\label{sLipineq}
\rho(f(x),f(y))\leq Ld(x,y).
\end{equation}
\end{definition}
The infimum of all constants satisfying \eqref{sLipineq} is called the \emph{semi-Lipschitz constant} of $f$, and is denoted by $\mathrm{SLIP}_{d,\rho}(f)$, or simply $\mathrm{SLIP}(f)$ when there is no risk of confusion. That is,
$$\mathrm{SLIP}(f):=\inf\big\{L>0: \,\,\eqref{sLipineq} \text{ holds}\big\}.$$
Moreover, if $f:X\to Y$ is a semi-Lipschitz function between quasi-metric spaces $(X,d)$ and $(Y,\rho)$, then,
$$\mathrm{SLIP}(f)=\sup_{d(x,y)>0}\frac{\rho(f(x),f(y))}{d(x,y)}.$$
In addition, we say that a function $g$ is $\lambda$-semi-Lipschitz whenever $\mathrm{SLIP}(g)\leq \lambda$.
Notice that, unlike in metric spaces, the conditions ``$d(x,y)>0$'' and ``$x\neq y$" are not equivalent. Nevertheless, this two conditions coincide whenever $d$ is a quasi-metric (as opposed to a quasi-hemi-metric), which is always the case for connected Finsler manifolds. 

In this work, for a real-valued function $f:(X,d)\to\mathbb{R}$, we say that $f$ is semi-Lipschitz if it so for the quasi-metric $d_u$. That is, if there exists $L\geq 0$ satisfying 
$$ d_u(f(x),f(y))\leq L d(x,y),$$
or equivalently, 
\begin{equation}\label{slipinequality}
  f(y)-f(x)\leq L d(x,y),
\end{equation}
\noindent where the conic hemi-norm $u(\cdot)$ is omitted on the left side of~\eqref{slipinequality}, as $t= u(t)$ for any $t\in \mathbb{R}_+$ and the inequality is trivially satisfied whenever the left side is negative. Note that one could consider semi-Lipschitzianity with respect to the reverse quasi-hemi-metric ${\bar{d_u}(x,y)=d_u(y,x)}$. This alternative has been implicitly used in the literature. However, we prefer to consider semi-Lipschitzianity as in \eqref{slipinequality}, since it is better adapted to asymmetric norms. We refer the reader to \cite[Remark~2.31]{daniilidis2021asymmetric} for a discussion on the subject.

For any quasi-metric space $(X,d)$, we write $\mathrm{SLIP}(X)$ to denote the space of real-valued semi-Lipschitz functions, where $\mathbb{R}$ is endowed with its usual quasi-metric $d_u$. When $(X,d)$ is a \emph{pointed} metric space (that is, a metric space with a distinguished point $x_0$), $\mathrm{SLIP}_0(X)$ will denote the subspace of semi-Lipschitz functions mapping the base point $x_0$ to $0$. A normed cone will always be considered as having its origin as its base point.

From this point onward, we will use the symbol $\|\cdot\vert_S$ to denote the semi-Lipschitz constant of a real-valued function. The reason for this notation is the fact that, for any pointed quasi-metric space $(X,d)$, $(\mathrm{SLIP}_0(X),\|\cdot\vert_S)$ is a normed cone (with respect to Definition~\ref{defnorm}).

\begin{remark}\label{Slipnnotlinear}
 In general, $\mathrm{SLIP}(X)$ and $\mathrm{SLIP}_0(X)$) need not be linear spaces. This can be seen by taking $(X,d)=(\mathbb{R},d_u)$, and considering the function ${f(x)=d_u(0,x)}$, which belongs to $\mathrm{SLIP}_0(X)$, while $-f$ does not. A more nuanced example will be given after Proposition~\ref{nonlinearbachirflores}.
\end{remark}

The following proposition ensures continuity of semi-Lipschitz functions. The proof is omitted, as it follows immediately from the definitions involved.
\begin{proposition}\label{contslip}
    Let ${f:(X,d)\to (Y,\rho)}$ be a semi-Lipschitz function between quasi-metric spaces $(X,d)$ and $(Y,\rho)$. Then, $f$ is continuous for the respective forward topologies of $(X,d)$ and $(Y,\rho)$. In particular, real-valued semi-Lipschitz functions are upper-semicontinuous.
\end{proposition}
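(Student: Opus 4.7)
The plan is to verify continuity at an arbitrary point directly from the semi-Lipschitz inequality and the definition of the forward topology, and then deduce the upper-semicontinuity assertion from the fact, already recalled in the text, that $\mathcal{T}(d_u)$ on $\mathbb{R}$ characterizes upper semicontinuity.

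First I would fix $x_0\in X$ and an arbitrary forward-open neighborhood $V$ of $f(x_0)$ in $(Y,\rho)$. By definition of $\mathcal{T}(\rho)$, there exists $\varepsilon>0$ such that $B_\rho(f(x_0),\varepsilon)\subseteq V$. Setting $L:=\mathrm{SLIP}(f)$ and choosing $\delta:=\varepsilon/(L+1)$ (so as to avoid separating the case $L=0$), the semi-Lipschitz inequality gives, for every $y\in B_d(x_0,\delta)$,
$$\rho(f(x_0),f(y))\leq L\,d(x_0,y)<\frac{L}{L+1}\,\varepsilon<\varepsilon,$$
so $f(y)\in B_\rho(f(x_0),\varepsilon)\subseteq V$. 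Hence $B_d(x_0,\delta)\subseteq f^{-1}(V)$, which shows that $f^{-1}(V)$ is forward-open, i.e.\ $f$ is continuous between the forward topologies.

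For the second assertion, applied to $(Y,\rho)=(\mathbb{R},d_u)$, the preceding argument shows that every real-valued semi-Lipschitz function is continuous from $(X,\mathcal{T}(d))$ to $(\mathbb{R},\mathcal{T}(d_u))$. Since $\mathcal{T}(d_u)$ is exactly the topology encoding upper semicontinuity of real-valued functions (as recalled just before Definition~\ref{sLipfunction}), this continuity translates into upper semicontinuity in the classical sense.

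There is no real obstacle here: the argument is a direct transcription of the standard metric proof, and the only point meriting care is bookkeeping with the asymmetry, namely writing $\rho(f(x_0),f(y))$ rather than $\rho(f(y),f(x_0))$ so that the definition of $B_\rho(f(x_0),\varepsilon)$ and the semi-Lipschitz inequality \eqref{sLipineq} line up with the same orientation. This is precisely the reason the authors state that the result follows immediately from the definitions.
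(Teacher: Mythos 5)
Your argument is correct and is precisely the direct verification the paper has in mind when it omits the proof as ``immediate from the definitions'': the $\varepsilon$--$\delta$ computation with the forward balls, with the orientation of $\rho(f(x_0),f(y))$ matched to \eqref{sLipineq}, together with the already-recalled identification of $\mathcal{T}(d_u)$ with upper semicontinuity. The only point worth a half-sentence more is that extracting $B_\rho(f(x_0),\varepsilon)\subseteq V$ from openness of $V$ uses that the forward balls form a \emph{base} (via the triangle inequality), not merely a generating family, but this is routine.
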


\begin{proposition}\label{dist}
    Let $(X,d)$ be a quasi-metric space with at least two points. Then, for every $x\in X$, the function ${d(x,\cdot):(X,d)\to (\mathbb{R},d_u)}$ is semi-Lipschitz and has semi-Lipschitz constant equal to $1$.
\end{proposition}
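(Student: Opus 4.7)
The plan is to verify the two halves of the equality $\|d(x,\cdot)\vert_S=1$ independently, which is the standard route for computing semi-Lipschitz constants. For the upper bound, I would apply the triangular inequality directly: for any $y,z\in X$, the inequality $d(x,z)\leq d(x,y)+d(y,z)$ rearranges to $d(x,z)-d(x,y)\leq d(y,z)$. Since the right-hand side is nonnegative and $d_u(a,b)=\max\{b-a,0\}$, this is exactly the semi-Lipschitz inequality $d_u(d(x,y),d(x,z))\leq 1\cdot d(y,z)$, showing that $d(x,\cdot)$ is $1$-semi-Lipschitz and hence $\|d(x,\cdot)\vert_S\leq 1$.

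For the matching lower bound, the natural strategy is to exhibit a pair $(y,z)$ at which the ratio in the definition of $\|d(x,\cdot)\vert_S$ actually attains the value $1$. Specializing to $y=x$ collapses one of the two distances, since $d(x,x)=0$, leaving the ratio
$$\frac{d_u\bigl(d(x,x),d(x,z)\bigr)}{d(x,z)}=\frac{d(x,z)}{d(x,z)}=1$$
as soon as one can find $z\in X$ with $d(x,z)>0$. Since $X$ contains at least two points, there exists $z\neq x$; in the quasi-metric case (axiom (ii)) this forces $d(x,z)>0$, completing the argument.

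I do not expect any serious obstacle. The only point requiring mild care is the asymmetric bookkeeping: one must apply the triangular inequality with the orientation $d(x,z)\leq d(x,y)+d(y,z)$ (not the reverse), and measure the source distance by $d(y,z)$ rather than $d(z,y)$, in accordance with the choice $d_u(a,b)=\max\{b-a,0\}$ that the paper has fixed for the target $\mathbb{R}$. If one wanted to allow the more permissive quasi-hemi-metric setting (axiom (ii)'), the same argument works provided $d(x,\cdot)$ is not identically zero; this is the only borderline case and is irrelevant for the Finsler distance, which is a genuine quasi-metric.
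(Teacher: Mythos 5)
Your proof is correct; the paper omits the proof of this proposition as immediate from the definitions, and your argument (triangle inequality in the orientation $d(x,z)\leq d(x,y)+d(y,z)$ for the upper bound, evaluation at $y=x$ against a second point for the lower bound) is precisely the intended one. Your closing caveat about the quasi-hemi-metric case is also apt, since the paper's blanket convention allows $d$ to be a quasi-hemi-metric, and there the lower bound genuinely requires some point at positive distance from $x$.
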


Next, we present some definitions and results concerning semi-Lipschitz functions on Finsler manifolds. 

\begin{definition}[Asymmetric norm of a derivative]\label{asymnormder}
Let $(\mathcal{X},F)$ be a connected Finsler manifold and $f:\mathcal{X}\to \mathbb{R}$ a $C^1$-smooth function. The asymmetric norm of the derivative of $f$ at the point $x\in \mathcal{X}$ is defined by:
$$\|df(x)|_F=\sup\{df(x)(v)\::\:v\in T_x\mathcal{X},\:F(x,v)=1\}.$$
\end{definition}

The following proposition is a straightforward extension of \cite[Theorem 5]{GJR-13}, which relates the \emph{Lipschitz} constant of a $C^1$-smooth function with the supremum norm of its derivative. The same proof given in \cite{GJR-13} yields the corresponding one-sided result:

\begin{proposition}\label{normaderivadaslip}
Let $(\mathcal{X},F)$ be a connected Finsler manifold and $f:\mathcal{X}\to \mathbb{R}$ a $C^1$-smooth function. Then 
$$ \|f|_{S}=\|df|_{\infty}:=\sup\{\|df(x)|_F\::\:x\in \mathcal{X}\}\in [0,\infty],$$
where $\|f|_{S}$ is the semi-Lipschitz constant of f with respect to the Finsler quasi-metric $d_{F}$.
\end{proposition}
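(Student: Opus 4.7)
The plan is to establish the two inequalities $\|df|_\infty\le \|f|_S$ and $\|f|_S\le \|df|_\infty$ separately, mirroring the strategy of \cite[Theorem~5]{GJR-13} but replacing symmetric estimates with their one-sided asymmetric analogues.

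For the inequality $\|df|_\infty\le \|f|_S$, I would fix $x\in\mathcal{X}$ and a tangent vector $v\in T_x\mathcal{X}$ with $F(x,v)=1$. Choosing any smooth curve $\sigma\colon[0,\varepsilon)\to\mathcal{X}$ with $\sigma(0)=x$ and $\dot\sigma(0)=v$, the definition of the Finsler length together with continuity of $F$ along $\sigma$ gives
\[
d_F(x,\sigma(t))\le \int_0^t F(\sigma(s),\dot\sigma(s))\,ds = t\,F(x,v)+o(t)=t+o(t),
\]
so using the semi-Lipschitz bound in the forward direction $f(\sigma(t))-f(x)\le \|f|_S\, d_F(x,\sigma(t))$ and dividing by $t>0$ before letting $t\to 0^+$ yields $df(x)(v)\le \|f|_S$. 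Taking the supremum over all such $v$ and all $x$ delivers the inequality.

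For the reverse inequality $\|f|_S\le \|df|_\infty$, I would first observe the pointwise estimate $df(x)(w)\le \|df(x)|_F\cdot F(x,w)$ for every $w\in T_x\mathcal{X}$, which is immediate from the positive homogeneity of $F$ and the definition of $\|df(x)|_F$ as a supremum over unit vectors (if $F(x,w)=0$ then $w=0$ by the separation property of the Minkowski norm, and the inequality is trivial). Given any $x,y\in\mathcal{X}$ and any piecewise $C^1$ path $\sigma\colon[a,b]\to\mathcal{X}$ from $x$ to $y$, applying the fundamental theorem of calculus piece by piece and integrating the pointwise inequality produces
\[
f(y)-f(x)=\int_a^b df(\sigma(t))(\dot\sigma(t))\,dt\le \|df|_\infty\int_a^b F(\sigma(t),\dot\sigma(t))\,dt=\|df|_\infty\,\ell_F(\sigma).
\]
Taking the infimum over all piecewise $C^1$ paths from $x$ to $y$ gives $f(y)-f(x)\le \|df|_\infty\, d_F(x,y)$, from which $\|f|_S\le \|df|_\infty$ follows. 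The argument is valid even in the extended sense $\|df|_\infty=+\infty$, and the case $\|df|_\infty=0$ forces $f$ constant along every path, hence $\|f|_S=0$ by connectedness.

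The only genuinely non-routine point is the first inequality, where care must be taken that the length estimate is performed in the correct \emph{forward} direction along $\sigma$ starting at $x$; the asymmetry of $F$ means one cannot replace $d_F(x,\sigma(t))$ by $d_F(\sigma(t),x)$ without losing the bound, and this is precisely where the present one-sided statement diverges from the Lipschitz version of \cite{GJR-13}. Everything else is a direct transcription of the symmetric argument.
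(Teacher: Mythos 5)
Your argument is correct and is essentially the proof the paper has in mind: the paper omits the proof precisely because, as you do, one obtains the one-sided result by transcribing the argument of \cite[Theorem~5]{GJR-13} — differentiating the forward semi-Lipschitz estimate along a curve with prescribed initial velocity for $\|df|_\infty\le\|f|_S$, and integrating the pointwise bound $df(x)(w)\le\|df(x)|_F\,F(x,w)$ along paths before taking the infimum over paths for the converse. Your remark about keeping the estimate in the forward direction $d_F(x,\sigma(t))$ is exactly the only adjustment the asymmetric setting requires.
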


We now present an asymmetric version of an approximation result from \cite[Theorem 8]{GJR-13}, which was shown in \cite{DJV}. This will be one of the key tools for the proof of our main result.

\begin{corollary}[Smooth approximation of semi-Lipschitz functions]\label{smoothslip}
Let $(\mathcal{X},F)$ be a connected, second countable Finsler manifold, $f:\mathcal{X}\to \mathbb{R}$ a semi-Lipschitz function, $\varepsilon:\mathcal{X}\to (0,+\infty)$ a continuous function and $r>0$. Then, there exists a $C^1$-smooth semi-Lipschitz function $g:\mathcal{X}\to \mathbb{R}$ such that:
\begin{enumerate}
\leftskip .35pc
    \item[$(i)$] $|g(x)-f(x)|\leq \varepsilon(x)$ for all $x\in \mathcal{X}$ ; 
    \item[$(ii)$] $\|g|_{S}\leq \|f|_{S}+r$.
\end{enumerate}
\end{corollary}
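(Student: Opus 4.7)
The plan is to follow the strategy for smoothly approximating Lipschitz functions on Finsler manifolds from \cite{GJR-13}, adapting each step to the asymmetric nature of the semi-Lipschitz constant. The construction proceeds by local convolution in charts and global gluing by a partition of unity, with Proposition~\ref{normaderivadaslip} translating the semi-Lipschitz bound into a pointwise control on the asymmetric norm of the derivative.

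First, I would use second countability (hence paracompactness) of $\mathcal{X}$ to produce a locally finite cover $\{U_i\}$ by chart domains $\phi_i:U_i\to\phi_i(U_i)\subset\mathbb{R}^n$, each $U_i$ shrunk enough so that within it the Finsler structure $F(y,\cdot)$ is $(1+\eta_i)$-close, uniformly in $y$, to the constant Minkowski norm $F_i:=F(x_i,\cdot)$ at a base point $x_i\in U_i$, for a parameter $\eta_i>0$ to be tuned later. Take a subordinate smooth partition of unity $\{\varphi_i\}$ whose supports sit inside slightly smaller relatively compact $V_i\subset U_i$.

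Next, on each chart I would smoothen $f$ by convolving its pullback $\tilde{f}_i:=f\circ\phi_i^{-1}$ with a standard (symmetric) smooth mollifier $\rho_{\tau_i}$ of radius $\tau_i$. The key computation, valid on an open set of $\mathbb{R}^n$ equipped with the constant asymmetric norm induced by $F_i$, is
$$(\tilde{f}_i\ast\rho_{\tau_i})(y)-(\tilde{f}_i\ast\rho_{\tau_i})(x)=\int[\tilde{f}_i(y-z)-\tilde{f}_i(x-z)]\rho_{\tau_i}(z)\,dz\leq \|\tilde{f}_i|_{S,F_i}\cdot\|y-x|_{F_i},$$
so convolution preserves the semi-Lipschitz constant with respect to the fixed asymmetric norm. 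Pulling back to $V_i$ and invoking Proposition~\ref{normaderivadaslip} both in the Euclidean chart and on $\mathcal{X}$ yields a $C^1$-smooth function $g_i$ with $|g_i-f|<\delta_i$ on $V_i$ and $\|dg_i(x)|_F\leq(1+\eta_i)^2\|f|_{S}$ for every $x\in V_i$, where $\delta_i>0$ can be chosen arbitrarily small by taking $\tau_i$ small.

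Finally, define $g:=\sum_i\varphi_i g_i$. Condition (i) follows at once from $|g-f|\leq\sum_i\varphi_i\delta_i$ by choosing each $\delta_i<\varepsilon(x)/2^{i+1}$ on the support of $\varphi_i$. For (ii), the identity $\sum_i d\varphi_i\equiv 0$ (which lets us insert the non-smooth $f$ into the gluing error) rewrites
$$dg(x)=\sum_i\varphi_i(x)dg_i(x)+\sum_i(g_i(x)-f(x))d\varphi_i(x),$$
and the triangle inequality for $\|\cdot|_F$ bounds the first sum by $(1+\max_i\eta_i)^2\|f|_{S}$. The main obstacle is precisely the second sum, where the coefficients $(g_i-f)$ can have either sign; this forces the appearance of $\max\{\|d\varphi_i(x)|_F,\|-d\varphi_i(x)|_F\}$ in the estimate, a symmetric quantity that genuinely differs from $\|d\varphi_i(x)|_F$ in the asymmetric setting. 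Local finiteness of $\{\varphi_i\}$ keeps this symmetric bound locally finite, so an additional tuning of $\delta_i$ (geometric in $i$, dividing by $\sup_{V_i}\max\{\|d\varphi_i|_F,\|-d\varphi_i|_F\}$) makes the second sum uniformly smaller than $r/2$, and a sufficiently small choice of $\eta_i$ then gives $\|g|_{S}\leq\|f|_{S}+r$.
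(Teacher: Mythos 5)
Your proposal is correct and follows essentially the same route as the paper, which omits the proof precisely because it is the straightforward adaptation of the mollification-plus-partition-of-unity argument of \cite{GJR-13} (Proposition~6, Lemma~7 and Theorem~8 there) with Lipschitz bounds replaced by semi-Lipschitz ones. You have in fact filled in the one genuinely asymmetric wrinkle -- the sign-indefinite gluing error $\sum_i(g_i-f)\,d\varphi_i$ forcing the symmetric quantity $\max\{\|d\varphi_i|_F,\|-d\varphi_i|_F\}$ into the estimate -- which is exactly the point where ``replacing Lipschitz by semi-Lipschitz'' requires care.
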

The proof of Corollary~\ref{smoothslip} (which is based in results analogous to Proposition 6 and Lemma~7 of \cite{GJR-13}) is omitted, since all arguments are straightforward adaptations of the aforementioned ones, replacing Lipschitz bounds with semi-Lipschitz ones. Nevertheless, we state the asymmetric version of Proposition 6 of \cite{GJR-13}, as it will be of use later. 

\begin{proposition}\label{locslip}
A function ${f:\mathcal{X}\to \mathcal{Y}}$ between connected Finsler manifolds $\mathcal{X}$ and $\mathcal{Y}$ is {$C$-semi-Lipschitz} if and only if it is \emph{locally} $C$-semi-Lipschitz, that is, for every $x\in \mathcal{X}$ there exists a neighborhood $U$ of $x$ such that $f|_U$ is $C$-semi-Lipschitz.
\end{proposition}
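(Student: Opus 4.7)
The forward implication is immediate from the definition, so the whole content is in showing that \emph{local} $C$-semi-Lipschitzianity forces \emph{global} $C$-semi-Lipschitzianity. My plan is to exploit the definition of the Finsler distance as an infimum of lengths of piecewise $C^1$ paths, combined with the compactness of the image of any such path, to convert a local estimate into a global one by a telescoping argument.

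Fix $x,y\in\mathcal{X}$, let $\varepsilon>0$, and choose a piecewise $C^1$ path $\sigma:[a,b]\to\mathcal{X}$ from $x$ to $y$ with $\ell_F(\sigma)<d_F(x,y)+\varepsilon$. By the local hypothesis, each $p\in\mathcal{X}$ has an open neighborhood $U_p$ such that $f|_{U_p}$ is $C$-semi-Lipschitz, in the sense that $d_G(f(u),f(v))\leq C\,d_F(u,v)$ for every $u,v\in U_p$. The set $\sigma([a,b])$ is compact, so finitely many such neighborhoods $U_{p_1},\dots,U_{p_n}$ cover it. A standard Lebesgue number argument (applied to the compact metric space $\sigma([a,b])$ with the symmetrized distance $d_F^s$, whose topology agrees with the manifold topology) yields a subdivision $a=t_0<t_1<\cdots<t_N=b$ such that each piece $\sigma([t_{i-1},t_i])$ is contained in some $U_{p_{j_i}}$.

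For each subinterval, the restriction $\sigma|_{[t_{i-1},t_i]}$ is itself a piecewise $C^1$ path joining $\sigma(t_{i-1})$ to $\sigma(t_i)$, so by definition of $d_F$ one has $d_F(\sigma(t_{i-1}),\sigma(t_i))\leq \ell_F(\sigma|_{[t_{i-1},t_i]})$. Applying the triangle inequality in $(\mathcal{Y},d_G)$ and the local $C$-semi-Lipschitz bound on each $U_{p_{j_i}}$, I can estimate
$$d_G(f(x),f(y))\leq \sum_{i=1}^{N} d_G\bigl(f(\sigma(t_{i-1})),f(\sigma(t_i))\bigr)\leq C\sum_{i=1}^{N}d_F(\sigma(t_{i-1}),\sigma(t_i))\leq C\,\ell_F(\sigma),$$
which is less than $C(d_F(x,y)+\varepsilon)$. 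Letting $\varepsilon\downarrow 0$ gives $d_G(f(x),f(y))\leq C\,d_F(x,y)$, as required.

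The only point requiring genuine care is the existence of the subdivision compatible with the covering; in a general quasi-metric space the forward topology need not even be Hausdorff, but here the Finsler framework supplies the classical manifold topology and hence a bona fide compactness argument applies. The asymmetry of $d_F$ never enters in a problematic way, because the decomposition of $\sigma$ respects the orientation of the path, and the local semi-Lipschitz inequality is applied in the natural direction $\sigma(t_{i-1})\to\sigma(t_i)$ on each piece.
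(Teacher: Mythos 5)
Your argument is correct and is precisely the path-subdivision/telescoping proof that the paper itself defers to: Proposition~\ref{locslip} is stated as the asymmetric version of Proposition~6 of \cite{GJR-13}, whose proof is omitted as a straightforward one-sided adaptation, and your write-up is exactly that adaptation (with the asymmetry handled correctly by keeping the orientation $\sigma(t_{i-1})\to\sigma(t_i)$ throughout). The only cosmetic remark is that the Lebesgue-number step is most cleanly run on the pulled-back open cover $\{\sigma^{-1}(U_{p_j})\}$ of the compact interval $[a,b]$ rather than on the image $\sigma([a,b])$, which avoids invoking uniform continuity of $\sigma$ separately; this changes nothing of substance.
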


\section{Algebraic challenges of non-reversibility}\label{secalg}

As already mentioned in Remark~\ref{Slipnnotlinear}, the set of real-valued semi-Lipschitz functions over a quasi-metric space may fail to be a linear space. This peculiarity makes the study of these types of functions a more challenging endeavor, as we can not directly apply the usual tools and results of functional analysis. This phenomenon is intimately related to the question of how asymmetric a given quasi-metric space $(X,d)$ is, in the sense of the relation (or lack thereof) between the quantities $d(x,y)$ and $d(y,x)$. A notion to quantify asymmetry was introduced independently by Rademacher in \cite{rademacher2004sphere} and by Bachir and Flores in \cite{bachir10.1216/rmj.2020.50.1951}. For a quasi-metric space $(X,d)$, the \emph{index of symmetry} of $(X,d)$ is defined by
\begin{equation*}
   c(X,d)=\inf_{d(x,y)>0}\quad\frac{d(y,x)}{d(x,y)} \in [0,1]. 
\end{equation*}
\noindent If $(X,d)$ is an extended quasi-metric space with at least two points at infinite distance, the index of symmetry will be considered to be $0$.

Clearly, the class of quasi-metric spaces with index of symmetry $1$ is exactly the class of metric spaces. It is also easy to check that $c(\mathbb{R},d_u)=0$. On the other hand, if $(\mathcal{X},d_\mathcal{X})$ is a connected Finsler manifold, then every point $p\in \mathcal{X}$ has a neighborhood $U_p$ such that the index of symmetry of the subspace $(U_p,d_\mathcal{X}|_{U_p})$ is strictly positive (this can be proved using the fact that Minkowski norms are ``equivalent'' to the Euclidean norm, see \cite[Lemma 6.2.1]{BCS}). It follows that the index of symmetry of every compact and connected Finsler manifold is strictly positive. Finsler manifolds with positive index of symmetry are called \emph{quasi-reversible} in \cite{GJR-13}.

A simple estimation shows that whenever there is a bi-semi-Lipschitz mapping between quasi-metric spaces $(X,d)$ and $(Y,\rho)$ (that is, a semi-Lipschitz bijection with semi-Lipschitz inverse), the \emph{sign} of $c(X,d)$ and $c(Y,\rho)$ must coincide, in the sense that if one index is $0$, the other one must be $0$ as well.

\begin{example}[Finsler manifold with index of symmetry $0$]\label{finslerindex0}
 Consider $\mathbb{R}$, endowed with the Finsler structure $F(x, v) = \vert v \vert - d \phi(x)(v)$, where $\phi : \mathbb R \to \mathbb{R}$ is given by
$$
\phi (x) := \int_0^x \frac{t^2}{1+t^2} \, dt.
$$
It is easy to see that the associated Finsler distance is $d_{F} (x, x')= \vert x-x'\vert + \phi (x) - \phi(x')$. To check that $c(\mathbb{R},d_F)=0$, consider points $x$ and $x'=x+1$. Using the definitions of $\phi$ and $d_F$, we can compute
$$ c(\mathbb{R},d_F)\leq \frac{d_F(x,x+1)}{d_F(x+1,x)}=\frac{\arctan(x+1)-\arctan(x)}{2+\arctan(x)-\arctan(x+1)},$$
\noindent which converges to $0$ when $x\to +\infty$.
\end{example}

It was shown in \cite{Tesis} that the index of symmetry can be used to characterize the class of quasi-metric spaces $(X,d)$ such that $\mathrm{SLIP}_0(X)$ is a linear space.

\begin{proposition}\label{nonlinearbachirflores}
Let $(X,d)$ be a quasi-metric space. Then, the following are equivalent:
\begin{itemize}
    \item[$(i)$] $c(X,d)>0$
    \item[$(ii)$] $\mathrm{SLIP}_0(X)$ is a linear space.
\end{itemize}
\end{proposition}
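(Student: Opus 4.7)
The plan is to prove the two implications separately, with the forward direction being a short estimate and the reverse requiring a construction. For $(i) \Rightarrow (ii)$, I would use the quantitative reformulation of $c(X,d) = c > 0$, namely $d(y,x) \leq c^{-1} d(x,y)$ for every pair (which holds because swapping variables in the inf gives $d(x,y) \geq c\, d(y,x)$ whenever $d(y,x) > 0$, and the bound is trivial otherwise). Given $f \in \mathrm{SLIP}_0(X)$ with $\|f|_S = L$, the estimate
\[ (-f)(y) - (-f)(x) = f(x) - f(y) \leq L\, d(y,x) \leq (L/c)\, d(x,y) \]
shows that $-f \in \mathrm{SLIP}_0(X)$ with $\|-f|_S \leq L/c$. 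Since $\mathrm{SLIP}_0(X)$ is already closed under sums and non-negative scalar multiplication as a normed cone, closure under negation upgrades it to a linear space.

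For $(ii) \Rightarrow (i)$ I would argue by contraposition: assuming $c(X,d) = 0$, the task is to build $g \in \mathrm{SLIP}_0(X)$ whose negation $-g$ fails to be semi-Lipschitz. First extract sequences $(x_n), (y_n)$ in $X$ with $d(x_n,y_n) > 0$ and $r_n := d(y_n,x_n)/d(x_n,y_n) \to 0$. The quasi-hemi-metric case in which $d(x,y) > 0$ and $d(y,x) = 0$ for some particular pair is immediate: the $1$-semi-Lipschitz function $f(z) := d(x,z) - d(x,x_0)$ satisfies $f(y) - f(x) = d(x,y) > 0$ while $d(y,x) = 0$, which alone prevents $-f$ from being semi-Lipschitz. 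Otherwise I would pass to a subsequence with $r_n < 2^{-n}$, set $f_n(z) := d(x_n,z) - d(x_n,x_0)$ (each $1$-semi-Lipschitz and vanishing at $x_0$), and pick weights $c_n := \sqrt{r_n}$, so that $C := \sum_n c_n < \infty$ while $c_n / r_n = r_n^{-1/2} \to \infty$.

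The candidate $g := \sum_n c_n f_n$ converges pointwise via the uniform bound $|f_n(z)| \leq \max\{d(x_0,z), d(z,x_0)\}$ (a pair of triangle inequalities), so $g \in \mathrm{SLIP}_0(X)$ with $\|g|_S \leq C$. To see that $-g$ is not semi-Lipschitz, I would isolate the $n$-th term and bound the rest from below using $d(x_k,x_n) \leq d(x_k,y_n) + d(y_n,x_n)$:
\[ g(y_n) - g(x_n) = c_n\, d(x_n,y_n) + \sum_{k \neq n} c_k\big(d(x_k,y_n) - d(x_k,x_n)\big) \geq (c_n - C r_n)\, d(x_n,y_n). \]
Dividing by $d(y_n,x_n) = r_n\, d(x_n,y_n)$ yields $(g(y_n) - g(x_n))/d(y_n,x_n) \geq c_n/r_n - C \to \infty$, forbidding any constant from serving as a semi-Lipschitz bound for $-g$.

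The main obstacle, and the reason no single $f_n$ does the job, is that each $-f_n$ is itself semi-Lipschitz --- its constant merely blows up with $n$. The counterexample must therefore aggregate the whole sequence into one function, which forces a delicate balance in the weights: they must be small enough that $\sum c_n < \infty$ (needed both to define $g$ pointwise and to control $\|g|_S$), yet large compared with $r_n$, so that the $n$-th contribution dominates the $O(r_n)$ remainder and witnesses the blow-up of the semi-Lipschitz constant of $-g$ along the pair $(x_n,y_n)$. The choice $c_n = \sqrt{r_n}$, after thinning to $r_n < 2^{-n}$, is the simplest way to satisfy both constraints simultaneously.
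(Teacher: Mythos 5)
Your argument is correct. Note that the paper itself offers no proof of this proposition---it is quoted from the reference \cite{Tesis}---so there is nothing to compare against; judged on its own, your proof is complete. The forward direction is the standard estimate $\|-f|_S\leq c^{-1}\|f|_S$, and the converse construction is sound: the weights $c_n=\sqrt{r_n}$ (after thinning so that $\sum_n\sqrt{r_n}<\infty$) make $g=\sum_n c_n\bigl(d(x_n,\cdot)-d(x_n,x_0)\bigr)$ a well-defined element of $\mathrm{SLIP}_0(X)$ with $\|g|_S\leq C$, while the lower bound $g(y_n)-g(x_n)\geq(c_n-Cr_n)\,d(x_n,y_n)$ forces $\bigl(g(y_n)-g(x_n)\bigr)/d(y_n,x_n)\to\infty$, so $-g$ is not semi-Lipschitz; the separate treatment of the degenerate quasi-hemi-metric case $d(y,x)=0<d(x,y)$ is also needed and correctly handled.
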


In particular, we have that for the Finsler manifold $\mathcal{X}=(\mathbb{R},F)$ of Example~\ref{finslerindex0}, $\mathrm{SLIP}_0(\mathcal{X})$ is not a linear space. The fact that this can happen even for quasi-metric spaces as well behaved as Finsler manifolds points to the necessity of working within an asymmetric framework for the result we aim to prove. To further illustrate the necessity of replacing the normed algebra $C_b^1(M)$ used in Theorems~\ref{myers-nakairiemann}, \ref{mnquasi} and \ref{mnreversible} by an asymmetric object, consider the following example. 

\begin{example}\label{counterexample}
Let $(\mathcal{X},F)$ be the non-reversible Finsler manifold defined in Example~\ref{finslerindex0}, and consider the normed algebras $C_b^1(\mathbb{R},|\cdot|)$ and $C_b^1(\mathcal{X})$. The norm of $C_b^1(\mathbb{R},|\cdot|)$ is given by ${\|f\| = \max\{\|f\|_{\infty},\|df\|_{\infty}\}}$, where  $\|df\|_{\infty}$ coincides with the usual Lipschitz semi-norm. For $C_b^1(\mathcal{X})$, a similar norm is considered, taking the maximum between  $\|f\|_{\infty}$  and the quantity ${\|df\|_{\infty}^F = \sup\{ |df(x)(v)|\,:\, (x,v)\in T\mathcal{X}, F(x,v)\leq 1\}}$ (see \cite{GJR-13} for the details). Using the definition of $F$, that is, $F(x,v)=|v|-d\phi(x)(v)$, it is easy to show that for any $f\in C_b^1(\mathbb{R})$, we have that
$$ \|df\|_\infty\leq \|df\|^F_\infty \leq 2\|df\|_\infty.$$
This implies that the identity mapping $\mathrm{Id}: C_b^1(\mathbb{R},|\cdot|) \to C_b^1(\mathcal{X})$ is an isomorphism of normed algebras. Yet, there cannot exist a bi-semi-Lipschitz mapping $\tau:\mathcal{X}\to \mathbb{R}$, as the indexes of symmetry of the manifolds are $0$ and $1$, respectively.
\end{example}
This should be contrasted with Theorem~\ref{mnquasi}, which asserts that every normed algebra isomorphism between $C_b^1(M)$ and $C_b^1(N)$ induces a bi-Lipschitz mapping between the Finsler manifolds $N$ and $M$, provided both have positive index of symmetry. We have shown that such a result can not hold for general Finsler manifolds, which points to the limitations of working within the symmetric framework of normed algebras. A possible explanation of this shortcoming of the space $C_b^1(M)$ lies in the fact that, whenever $M$ is a quasi-reversible Finsler manifold, the functions of the form $d_F(m,\cdot)$ are Lipschitz for all $m\in M$, with Lipschitz constant bounded by $c(M,d_F)^{-1}$ (see \cite[Theorem 11]{GJR-13}). This functions, which are an essential component of the technique used in \cite{GJR-10,GJR-09,GJR-13}, can be truncated and smoothly approximated with functions within $C_b^1(M)$ (see \cite{GJR-13}), which is a crucial step of the proof of Theorem~\ref{mnquasi}. On the other hand, for a non-reversible Finsler manifold, the functions $d_F(m,\cdot)$ may fail to be Lipschitz (with respect to the usual metric on $\mathbb{R}$), but are semi-Lipschitz instead. They can still be truncated and smoothly approximated (using Corollary~\ref{smoothslip}), but these smooth approximations may not belong to $C_b^1(M)$. They will however belong to a suitable space of smooth semi-Lipschitz functions, which we will now define. 

\begin{definition}
Let $(\mathcal{X},F)$ be a second countable and connected Finsler manifold. We will consider the following sets of $C^1$-smooth, bounded, semi-Lipschitz and real-valued functions on $\mathcal{X}$:
\begin{align*}
SC_b^1(\mathcal{X}):=&\{f\in C^1(\mathcal{X})\,:\,\|f\|_\infty<\infty,\,\|f|_S<\infty\},\\
SC_b^1(\mathcal{X})_+:=&\{f\in SC_b^1(\mathcal{X})\,:\,f\geq 0\}.
\end{align*}
\end{definition}

Clearly, both sets are subcones of $\mathrm{SLIP}(\mathcal{X})$, and they can be endowed with the natural asymmetric hemi-norm $\|f|=\max\{\|f\|_\infty,\|f|_S\}.$ Note that, thanks to Proposition~\ref{normaderivadaslip}, an equivalent definition for both spaces and their conic hemi-norm can be given by asking for the differential of each function to be bounded (in the sense of Definition~\ref{asymnormder}). 

Given that, in general, the cone $SC_b^1(\mathcal{X})$ fails to be a linear space, we cannot expect $SC_b^1(\mathcal{X})$ to be an algebra in the usual sense. Moreover, the lack of additive inverses makes it impossible for $SC_b^1(\mathcal{X})$ to be closed under multiplication. Indeed, consider a function $f\in SC_b^1(\mathcal{X})$ such that $-f$ does not belong to $SC_b^1(\mathcal{X})$ (as in Example~\ref{finslerindex0}). Then, the product of the constant function of value $(-1)$ and $f$ does not belong to $SC_b^1(\mathcal{X})$.
Despite this negative observation, we can avoid this problem by restricting ourselves to functions with non negative values.
\begin{proposition}\label{sc+mult}
 Let $(\mathcal{X},F)$ be a Finsler manifold. The set $SC_b^1(\mathcal{X})_+$ is closed under pointwise multiplication. Moreover, for any $f,g\in SC_b^1(\mathcal{X})_+$, we have
\begin{equation}\label{submult}
    \|f g|\leq 2 \|f| \|g|.
\end{equation}
\end{proposition}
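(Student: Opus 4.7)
The plan is standard (add-and-subtract) but the crucial point is to exploit the non-negativity hypothesis to handle the one-sided semi-Lipschitz bounds, which, unlike Lipschitz bounds, cannot be multiplied by quantities of arbitrary sign. I will split the claim into three parts: (a) $fg \in SC_b^1(\mathcal{X})_+$, (b) the sup-norm bound, and (c) the semi-Lipschitz bound; combining (b) and (c) yields the factor $2$ in \eqref{submult}.

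For (a), since $f,g$ are $C^1$-smooth and non-negative, their pointwise product is $C^1$-smooth and non-negative. Boundedness of $fg$ is immediate once (b) is established. To verify $fg \in SC_b^1(\mathcal{X})_+$ it then suffices to check $\|fg|_S<\infty$, which is precisely what (c) will give.

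For (b), one simply writes $\|fg\|_\infty \leq \|f\|_\infty\|g\|_\infty \leq \|f|\,\|g|$, using the definition $\|f|=\max\{\|f\|_\infty,\|f|_S\}$.

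For (c), fix $x,y\in \mathcal{X}$ and use the telescoping decomposition
\begin{equation*}
f(y)g(y)-f(x)g(x)=f(y)\bigl(g(y)-g(x)\bigr)+g(x)\bigl(f(y)-f(x)\bigr).
\end{equation*}
Here is where the non-negativity is essential: since $f(y)\geq 0$ and $g(x)\geq 0$, the one-sided semi-Lipschitz inequalities $g(y)-g(x)\leq \|g|_S\, d_F(x,y)$ and $f(y)-f(x)\leq \|f|_S\, d_F(x,y)$ survive the multiplication, yielding
\begin{equation*}
f(y)g(y)-f(x)g(x)\leq \bigl(\|f\|_\infty \|g|_S + \|g\|_\infty \|f|_S\bigr)\, d_F(x,y) \leq 2\,\|f|\,\|g|\, d_F(x,y).
\end{equation*}
This shows $\|fg|_S \leq 2\|f|\,\|g|$; combined with (b), this gives both $fg\in SC_b^1(\mathcal{X})_+$ and the bound \eqref{submult}.

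The only subtle point — the one that would fail in the two-sided Lipschitz world only trivially but is the heart of the matter here — is that without the hypothesis $f,g\geq 0$, one of the factors $f(y)$ or $g(x)$ could be negative, and multiplying a one-sided estimate by a negative number reverses the inequality. Thus restricting to $SC_b^1(\mathcal{X})_+$ is not merely convenient but genuinely necessary for the argument, mirroring the earlier observation that the full cone $SC_b^1(\mathcal{X})$ cannot be closed under multiplication.
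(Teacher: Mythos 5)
Your proof is correct and follows essentially the same route as the paper: the same telescoping decomposition of $fg(y)-fg(x)$, the same use of non-negativity to multiply the one-sided semi-Lipschitz estimates by $f(y)$ and $g(x)$ without reversing them, and the same bound $\|f\|_\infty\|g|_S+\|g\|_\infty\|f|_S\leq 2\|f|\,\|g|$ producing the factor $2$. The only cosmetic difference is that the paper inserts the positive-part function $u(t)=\max\{t,0\}$ as an intermediate step, which your direct multiplication by the non-negative factors makes unnecessary.
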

\begin{proof}
It is clear that \eqref{submult} implies that the function $fg$ belongs to $SC_b^1(\mathcal{X})_+$. Moreover, it is also clear that $\|fg\|_\infty \leq \|f\|_\infty\|g\|_\infty\leq \|f|\|g|$, so we only need to prove that $\|fg|_S\leq 2\|f|\|g|$. To this end, let $x,y\in \mathcal{X}$, and recall the notation $u(t)=\max\{t,0\}$ for $t\in \mathbb{R}$. Since $f$ and $g$ are non negative, we have
\begin{align*}
fg(x)- fg(y)&= f(x)\left( g(x)-g(y)\right)+g(y)\left( f(x)-f(y)\right)\\
&\leq f(x)u\left( g(x)-g(y)\right)+g(y)u\left( f(x)-f(y)\right)\\
&\leq \|f\|_\infty u\left( g(x)-g(y)\right) +  \|g\|_\infty u\left( f(x)-f(y)\right)\\
&\leq \|f\|_\infty \|g|_S d_{\mathcal{X}}(y,x) + \|g\|_\infty \|f|_S d_{\mathcal{X}}(y,x) \\
&= ( \|f\|_\infty \|g|_S+\|g\|_\infty \|f|_S)d_{\mathcal{X}}(y,x)\\
&\leq 2\max\{\|f\|_\infty \|g|_S, \|g\|_\infty \|f|_S\}d_{\mathcal{X}}(y,x)\\
&\leq 2d_{\mathcal{X}}(y,x)\max\Bigl\{\|g|_S\max\{\|f\|_\infty,\|f|_S\}, \|f|_S \max\{\|g\|_\infty, \|g|_S\}\Bigr\}\\
&=2 \max\{\|f|\|g|_S ,\|g|\|f|_S\}d_{\mathcal{X}}(y,x)\\
&\leq 2\|f|\|g|d_{\mathcal{X}}(y,x).
\end{align*}
The proof is then finished.
\end{proof}
Whit this, $SC_b^1(\mathcal{X})_+$ is:
\begin{itemize}
    \item[$(i)$] A convex cone closed under the operations of addition and scalar multiplication (using non negative scalars).
    \item[$(ii)$] Closed under the operation of pointwise multiplication, which distributes over addition and scalar multiplication.
    \item[$(iii)$] A normed cone satisfying $\|fg|\leq 2 \|f|\|g|$ for all ${f,g\in SC_b^1(\mathcal{X})_+}$.
\end{itemize}
Inspired by this properties, we introduce new definitions.
\subsection{New asymmetric structures}\label{newstructures}
First, we recall the algebraic definition of a semiring: a semiring is a commutative monoid (that is, a semigroup with a zero element) endowed with a compatible multiplication operation that distributes over the addition of the monoid. 

\begin{definition}[Conic-semiring]\label{conicsemiring}
 A conic-semiring will be a cone (as per Definition~\ref{defcone}) endowed with a multiplication that makes it a semiring. If the cone is endowed with a conic hemi-norm for which there exists a constant $K>0$ such that $\|fg|\leq K\|f|\|g|$ for all $f,g$ in the cone, we will call it a \emph{normed conic-semiring}. A normed conic-semiring will be called \emph{unital} if it has a multiplicative unit.
\end{definition}

Just like the notion of cones is the asymmetric version of real linear spaces (by using monoids instead of groups and $\mathbb{R}_+$ instead of $\mathbb{R}$ for scalars), the notion of semiring can be seen as the asymmetric version of rings (replacing the additive group with a monoid). By combining these two ideas, we can view conic-semirings as an asymmetric version of algebras. Considering that normed algebras have algebraic and metric components, we can see Definition~\ref{conicsemiring} as a generalization which forgoes the former (linear and ring structure) in order to preserve the latter (having a well defined norm). We will also explore the opposite idea, that is, sacrificing properties of the norm in order to preserve the algebraic structure. Recall that an \emph{extended} asymmetric hemi-norm has the same properties as an asymmetric hemi-norm (see Definition~\ref{defasymmnorm}), but is allowed to take the value $+\infty$. The notion of (symmetric) extended norms was studied by Beer and Vanderwerff in \cite{beer2015structural}, and was generalized shortly after by Salas and Tapia-Garc\'ia in \cite{salas2016extended}, where they studied extended seminormed and extended topological vector spaces. To the best of our knowledge, this idea has not yet been combined with asymmetric structures. The proposed relaxation of asymmetric normed spaces will allow us to study linear spaces where the asymmetric hemi-norm is not always well defined (in the sense of taking finite values).

\begin{definition}[Finite subcone]\label{finitesubcone}
Let $(E,\|\cdot|)$ be an extended asymmetric normed space. The subset $F=\{ x\in E ~:~ \|x|<+\infty\}$ (which is always a cone) is called the \emph{finite subcone} of $E$.    
\end{definition}

\begin{definition}[Extended asymmetric normed algebra]\label{extasymalg} An algebra $\mathcal{A}$ endowed with an extended asymmetric hemi-norm $\|\cdot |$ will be called an \emph{extended asymmetric normed algebra} if the finite subcone satisfies a sub-multiplicative condition for the norm, i.e., there exists $K>0$ such that $\|fg|\leq K\|f|\|g|$ for all $f$ and $g$ with finite norm.
\end{definition}

This new definition is clearly connected with normed conic-semirings, as the finite subcone of any extended asymmetric normed algebra is a normed conic-semiring. Conversely, one can also define an extended asymmetric normed algebra from a given normed conic-semiring. When working with a cone $C\subseteq\mathbb{R}^X$ for some set $X$, $\mathrm{span}(C)$ will denote the linear span of $C$ in $\mathbb{R}^X$.

\begin{proposition}\label{conicsemiringtoalgebra}
    Let $(C,\|\cdot|)$ be a normed conic-semiring contained in $\mathbb{R}^X$ for some set $X$. Set $\mathcal{A}=\mathrm{span}(C)$, and for any $a\in \mathcal{A}$, define
$$
\| a |_{\mathcal{A}}=\begin{cases}
\begin{array}{ccc}
&\| a| &\text{ if }a\in C \\
&+\infty&\text{ if }a\notin C
\end{array}
.\end{cases}
$$
Then, $(\mathcal{A}, \| \cdot |_{\mathcal{A}})$ is an extended asymmetric normed algebra, and the finite subcone of $\mathcal{A}$ coincides with $C$. In this case, we will say that the extended asymmetric normed algebra $\mathcal{A}$ is \emph{generated} by the normed conic-semiring $C$.
\end{proposition}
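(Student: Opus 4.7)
The plan is to verify, in order, that $\mathcal{A} = \mathrm{span}(C)$ carries a natural $\mathbb{R}$-algebra structure, that $\|\cdot|_{\mathcal{A}}$ is an extended asymmetric hemi-norm, and that the resulting finite subcone is $C$ with the sub-multiplicativity inherited from the conic-semiring norm.

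First, since $C$ is a cone containing $0$, every element of $\mathcal{A}$ can be written as $c_1 - c_2$ with $c_1, c_2 \in C$: in any finite real linear combination, group terms by sign and absorb the scalars into $C$ using its closure under addition and non-negative scaling. Closure of $\mathcal{A}$ under addition and real scalar multiplication is then immediate, and for closure under multiplication I would expand, for $a = c_1 - c_2$ and $b = c_3 - c_4$,
\begin{equation*}
ab = (c_1 c_3 + c_2 c_4) - (c_1 c_4 + c_2 c_3),
\end{equation*}
which lies in $\mathcal{A}$ since $C$ is closed under the semiring's addition and multiplication. Well-definedness of this product (independence from the representation $c_1 - c_2$) follows from distributivity of the semiring multiplication: if $c_1 - c_2 = c_1' - c_2'$ then $c_1 + c_2' = c_1' + c_2$ in $C$, and distributivity forces the two expressions for $ab$ to agree.

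Next, I would check the three axioms of an extended asymmetric hemi-norm. The triangular inequality is trivial whenever $\|a|_{\mathcal{A}}$ or $\|b|_{\mathcal{A}}$ equals $+\infty$; otherwise $a, b \in C$, so $a+b \in C$, and the inequality is inherited from the conic hemi-norm. Positive homogeneity splits according to whether $a \in C$ or not, using invariance of $C$ under non-negative scaling (with the convention $0\cdot(+\infty) = 0$ for $r=0$). The separation condition (ii)' of Definition~\ref{defasymmnorm} follows from condition (ii) of Definition~\ref{defnorm}: if both $\|a|_{\mathcal{A}}$ and $\|-a|_{\mathcal{A}}$ vanish, then $a$ and $-a$ both lie in $C$ with vanishing conic norm, forcing $a = 0$.

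Finally, by construction the finite subcone of $(\mathcal{A}, \|\cdot|_{\mathcal{A}})$ is exactly $C$, and the submultiplicativity $\|fg| \leq K\|f|\|g|$ assumed on $C$ is precisely the estimate required by Definition~\ref{extasymalg}. No deep obstacle arises in the proof; the only point that warrants real care is the algebra closure of $\mathrm{span}(C) = C - C$, which rests on combining the cone structure of $C$ with the distributivity of the semiring multiplication to ensure that the extended product is both $\mathcal{A}$-valued and well-defined.
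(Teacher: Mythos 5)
Your proof is correct and follows essentially the same route as the paper: the key step in both is writing $\mathrm{span}(C)=C-C$ and expanding $(c_1-c_2)(c_3-c_4)=(c_1c_3+c_2c_4)-(c_1c_4+c_2c_3)$ to get closure under multiplication, after which the norm axioms and the identification of the finite subcone are routine. Your additional verification of the hemi-norm axioms and of well-definedness (which is in fact automatic here, since $C\subseteq\mathbb{R}^X$ and the product is pointwise) only spells out what the paper dismisses as holding by definition.
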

\begin{proof}
Let us verify that $\mathcal{A}$ is an algebra. Since it is by definition a linear space, we only need to check that it is closed under multiplication. Let $x,y\in \mathcal{A}$. Since $C$ is a cone, $\mathrm{span}(C)$ can be written as $C-C=\{c_1 - c_2 ~:~ c_1,c_2\in C\}$, so we can write 
$$xy= (x_1-x_2)(y_1-y_2)=x_1y_1-x_1y_2-x_2y_1+x_2y_2=(x_1y_1+x_2y_2) - (x_1y_2 +x_2y_1),$$
\noindent with $x_i$ and $y_i$ in $C$, for $i=\{1,2\}$. It follows that $xy\in C-C=\mathcal{A}$. The remaining properties hold by definition. 
\end{proof}
Using Proposition~\ref{conicsemiringtoalgebra}, we can define our desired function space.
\begin{definition}\label{algebrafinsler}
Let $(\mathcal{X},F)$ be a connected Finsler manifold. We define $\mathcal{A}(\mathcal{X})$ to be the extended asymmetric normed algebra generated by the conic-semiring $SC_b^1(\mathcal{X})_+$, that is,
$$\mathcal{A}(\mathcal{X})=\mathrm{span}\left( SC_b^1(\mathcal{X})_+\right).$$   
The extended asymmetric hemi-norm of an element $f\in \mathcal{A}(\mathcal{X})$ is given by $\|f|=\max\{\| f\|_\infty,\|f|_S\}$ if ${f\in SC_b^1(\mathcal{X})_+}$, and $+\infty$ otherwise.
\end{definition}
Notice that, since every function in $SC_b^1(\mathcal{X})$ is bounded, we can write $f$ as ${(f + \|f\|_\infty) - \|f\|_\infty}$, which belongs to $\mathcal{A}(\mathcal{X})$. Therefore, $SC_b^1(\mathcal{X})$ and $SC_b^1(\mathcal{X})_+$ have the same linear span (although they do not induce the same extended asymmetric hemi-norm on $\mathcal{A}(\mathcal{X})$). It follows that if $\mathcal{X}$ is a reversible Finsler manifold (in particular, if $\mathcal{X}$ is Riemannian), the normed algebras $\mathcal{A}(\mathcal{X})$ and $C_b^1(\mathcal{X})$ coincide.
\begin{definition}\label{continuityextnorm}
A linear function $T$ between two extended asymmetric normed algebras $(\mathcal{A}_2,\|\cdot|_2)$ and $(\mathcal{A}_1,\|\cdot|_1)$ is said to be \emph{forward continuous} (or just continuous) if there exists a constant $K\geq 0$ such that $$\|Tf|_1\leq K\|f|_2,$$
\noindent for all $f\in \mathcal{A}_2$. The least constant $K$ satisfying this inequality is called the \emph{norm} of $T$, denoted by $\|T|$.
\end{definition}

Note that a continuous linear function necessarily maps the finite subcone of its domain into the finite subcone of its range.

\begin{definition}\label{extasymiso}
Given two extended asymmetric normed algebras $(\mathcal{A}_1,\|\cdot|_1)$ and $(\mathcal{A}_2,\|\cdot|_2)$, a mapping $T:\mathcal{A}_2\to \mathcal{A}_1$ is called an \emph{extended asymmetric normed algebra isomorphism} provided:
\begin{enumerate}
    \item[$(i)$] $T$ is linear and bijective,
    \item[$(ii)$] $T$ is bicontinuous, i.e., $T$ and $T^{-1}$ are continuous in the sense of Definition~\ref{continuityextnorm}.
    \item[$(iii)$] $T(fg)=Tf\cdot Tg$ for all $f,g\in \mathcal{A}_2.$
\end{enumerate}
The isomorphism $T$ is called an extended asymmetric normed algebra \emph{isometry} if $\|Tf|_1=\|f|_2$ for all $f\in \mathcal{A}_2$, or equivalently, if $\|T|=\|T^{-1}|=1$.
\end{definition}

In what follows, whenever we mention the dual $\mathcal{A}^*$ of an extended asymmetric normed algebra $\mathcal{A}$, it will be its dual cone when viewing $\mathcal{A}$ as an extended asymmetric normed space, that is, 

$$ \mathcal{A}^*:=\left\{\varphi:\mathcal{A}\to \mathbb{R}~:~\varphi\text{ is linear and forward upper semicontinuous} \right\}.$$
We remark that the forward upper semicontinuity of the linear function $\varphi$ is equivalent to it being semi-Lipschitz, or that the quantity 
$$\|\varphi|^*:=\sup_{\|a|\leq 1}\varphi(a)$$
\noindent which will be referred as the \emph{dual norm} of the function $\varphi$, is finite (see \cite[Proposition 2.35]{daniilidis2021asymmetric}).
Given a functional  $\varphi \in \mathcal{A}^*$, we say $\varphi$ is \emph{multiplicative} if $\varphi(ab)=\varphi (a) \varphi(b)$ for all $a,b\in \mathcal{A}$. If the algebra $\mathcal{A}$ is contained in $\mathbb{R}^X$ for some set $X$ (i.e., $\mathcal{A}$ is an algebra of functions over $X$), we will say a functional $\varphi \in \mathcal{A}^*$ is \emph{positive} if $\varphi(f)\geq 0$ whenever $f\geq 0$ as a function.

\section{Proof of the main result}\label{secmn}

Our aim is to prove that for every pair of connected, second countable and forward complete Finsler manifolds $\mathcal{X}$ and $\mathcal{Y}$, every extended asymmetric normed algebra isometry between $\mathcal{A}(\mathcal{Y})$ and $\mathcal{A}(\mathcal{X})$ induces an isometry between $\mathcal{X}$ and $\mathcal{Y}$. More precisely, we will prove the following. 

\begin{theorem}[Non reversible Myers-Nakai Theorem, isometric version]\label{thm:main}
Let $(\mathcal{X},F_\mathcal{X})$ and $(\mathcal{Y},F_{\mathcal{Y}})$ be connected, second countable and forward complete Finsler manifolds. Then, $(\mathcal{X},F_\mathcal{X})$ and $(\mathcal{Y},F_{\mathcal{Y}})$ are Finsler isometric if and only if there exists an extended asymmetric normed algebra isometry ${T:\mathcal{A}(\mathcal{Y})\to \mathcal{A}(\mathcal{X})}$. Moreover, every extended asymmetric normed algebra isometry between $\mathcal{A}(\mathcal{Y})$ and $\mathcal{A}(\mathcal{X})$ is of the form $Tf=f\circ h$, where $h:\mathcal{X}\to \mathcal{Y}$ is a Finsler isometry.
\end{theorem}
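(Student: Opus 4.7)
The reverse direction is straightforward: a Finsler isometry $h:\mathcal{X}\to\mathcal{Y}$ preserves Finsler distances and therefore (by Proposition \ref{normaderivadaslip}) the sup-norm and the semi-Lipschitz constant of every $C^1$-smooth function, so the pullback $T:f\mapsto f\circ h$ defines an extended asymmetric normed algebra isometry between $\mathcal{A}(\mathcal{Y})$ and $\mathcal{A}(\mathcal{X})$. The bulk of the work lies in the converse direction and the ``moreover'' statement, which I would address by identifying points of the underlying manifold with a distinguished class of functionals on the algebra.

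For each manifold $\mathcal{X}$ I would consider the \emph{spectrum} $\Sigma(\mathcal{X})\subseteq\mathcal{A}(\mathcal{X})^*$ consisting of those $\varphi$ that are forward continuous, multiplicative, positive on $SC_b^1(\mathcal{X})_+$ and satisfy $\varphi(1)=1$. Every point $x\in\mathcal{X}$ yields an evaluation $\delta_x\in\Sigma(\mathcal{X})$ (continuity is clear since $|\delta_x(f)|\leq\|f\|_\infty\leq\|f|$ on the finite subcone), and the crucial lemma is the converse: every $\varphi\in\Sigma(\mathcal{X})$ equals $\delta_x$ for a unique $x$. Granting this, the hypothesis that $T$ is an algebra isomorphism forces $\varphi\circ T\in\Sigma(\mathcal{Y})$ for every $\varphi\in\Sigma(\mathcal{X})$, so $T$ dualizes to a bijection $h:\mathcal{X}\to\mathcal{Y}$ determined by $\delta_x\circ T=\delta_{h(x)}$. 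Since $(Tf)(x)=\delta_x(Tf)=\delta_{h(x)}(f)=f(h(x))$ for every $f$ in the finite subcone of $\mathcal{A}(\mathcal{Y})$, linearity extends $Tf=f\circ h$ to all of $\mathcal{A}(\mathcal{Y})$.

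The proof of the spectrum-equals-points lemma is where I expect the main difficulty. Uniqueness is easy: two distinct points $x\neq y$ can be separated by a non-negative smooth semi-Lipschitz function obtained by truncating and then smoothing the distance function $d_{F_\mathcal{X}}(x,\cdot)$ via Corollary \ref{smoothslip} and Proposition \ref{dist}, taking advantage of the fact that $d_{F_\mathcal{X}}$ is a genuine quasi-metric on a connected Finsler manifold. For existence, fix a basepoint $x_0$ and consider smooth, bounded, non-negative approximations $\tilde d_n$ of the truncated distances $\min\{d_{F_\mathcal{X}}(x_0,\cdot),n\}$. Multiplicativity of $\varphi$, combined with the observation that functions in $SC_b^1(\mathcal{X})_+$ with disjoint supports have zero product, forces the ``support'' of $\varphi$ to behave as a single point; forward completeness then guarantees that this support is realized by an actual point $x^*\in\mathcal{X}$ rather than an ideal limit, since a forward-Cauchy sequence extracted from the relevant level sets must converge, and partition-of-unity constructions (also based on Corollary \ref{smoothslip}) pin down $\varphi=\delta_{x^*}$.

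Once $h$ is constructed, I would recover the Finsler distance from the hemi-norm. For $x_1,x_2\in\mathcal{X}$, smoothing a bounded truncation of $d_{F_\mathcal{Y}}(h(x_1),\cdot)$ via Corollary \ref{smoothslip} produces $f_n\in SC_b^1(\mathcal{Y})_+$ with $\|f_n|\to 1$ and $f_n(h(x_2))-f_n(h(x_1))\to d_{F_\mathcal{Y}}(h(x_1),h(x_2))$; since $Tf_n=f_n\circ h$ and $T$ is a hemi-norm isometry, applying the semi-Lipschitz bound of $Tf_n$ at the pair $(x_1,x_2)$ gives $d_{F_\mathcal{Y}}(h(x_1),h(x_2))\leq d_{F_\mathcal{X}}(x_1,x_2)$, and running the same argument for $T^{-1}$ produces the reverse inequality. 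Hence $h$ is a bijective metric isometry between the Finsler distances, and Theorem \ref{denghou} of Deng and Hou upgrades it to an honest Finsler isometry. The principal obstacle throughout is that one is working inside a cone endowed only with a sub-multiplicative extended hemi-norm rather than an honest commutative Banach algebra, so the Gelfand-type identification of characters with points has to be rebuilt from scratch, with positivity and forward completeness playing the roles of the compactness arguments unavailable in the asymmetric setting.
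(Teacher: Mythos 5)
Your outline follows the paper's general strategy (characters of the algebra, dualizing $T$ to a point map $h$, recovering the quasi-metric from smoothed truncated distance functions, and invoking Theorem~\ref{denghou} at the end; your distance-recovery step is essentially the paper's Lemma~\ref{metric}). But there is a genuine gap at the step you yourself flag as crucial: the claim that every forward continuous, multiplicative, positive, unital functional on $\mathcal{A}(\mathcal{X})$ is an evaluation $\delta_{x^*}$ is false for non-compact manifolds, and forward completeness does not repair it. Take $\mathcal{X}=\mathbb{R}$ with the Euclidean structure, so that $\mathcal{A}(\mathcal{X})=C_b^1(\mathbb{R})$. A pointwise limit of $\delta_{x_n}$ along a free ultrafilter, with $x_n=n$, is linear, multiplicative, positive, unital, and has dual norm $1$, yet it is no evaluation. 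The sequence carrying the ``support'' of such a functional is not forward-Cauchy --- the points recede to infinity at mutually bounded-below distances --- so completeness never produces the limit point $x^*$ your argument requires. This is exactly the classical difficulty in Myers--Nakai-type proofs: the character space is a compactification-like object that strictly contains the manifold whenever the manifold is unbounded.

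The missing ingredient is an intrinsic criterion, preserved by the dual homeomorphism $T^*$, that singles out the evaluations inside the character space. The paper supplies it in Proposition~\ref{vecnum}: $\varphi$ is an evaluation if and only if it has a countable neighborhood basis for the pointwise topology. That is where forward completeness actually enters: via Hopf--Rinow one builds a proper $C^1$-smooth semi-Lipschitz function $g$ by smoothing $d_{\mathcal{X}}(p,\cdot)$ with Corollary~\ref{smoothslip}, and an oscillating composition $\theta\circ g$ then contradicts pointwise convergence of $\delta_{x_{n_k}}$ to a non-evaluation character admitting a countable basis (the density of evaluations, Proposition~\ref{dense}, is needed to extract that sequence in the first place). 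Since first countability is a topological invariant, $T^*$ must map $\delta(\mathcal{X})$ onto $\delta(\mathcal{Y})$, and from there your construction of $h$ and the two distance inequalities go through essentially as in the paper's Theorem~\ref{mnasymm}. Without some such criterion, $h$ is not even well defined.
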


To this end, we follow the scheme used in \cite{GJR-13}, with the necessary adjustments required for our asymmetric framework.
\begin{definition}
Let $(\mathcal{X},F)$ be a Finsler manifold. We define the \emph{structure space} of $\mathcal{X}$ as
$$ \mathcal{S}(\mathcal{X}):=\{\varphi:\mathcal{A}(\mathcal{X})\to \mathbb{R}~:~\varphi\text{ is non-zero, linear, multiplicative and forward upper semicontinuous}\}.$$
Note that $ \mathcal{S}(\mathcal{X})\subset \mathcal{A}(\mathcal{X})^*$
\end{definition}
\begin{remark}\label{caractcont}
Every $\varphi\in\mathcal{S}(\mathcal{X})$ is in fact continuous. Indeed, to see that $-\varphi$ is upper semicontinuous, we need to give a bound for $-\varphi(f)$ for any $f\in \mathcal{A}(\mathcal{X})$. By denoting the constant function of value $1$ as $\mathds{1}$, and using the fact that $\varphi(\mathds{1})=1$ (since $\varphi$ is non-zero), we have
$$-\varphi(f)=\varphi(-\mathds{1})\varphi(f)\leq \varphi(-\mathds{1})\|f|\|\varphi|^*\leq K \|f|.$$
As a consequence, an equivalent definition of the structure space $\mathcal{S}(\mathcal{X})$ could be given by requiring each functional to be forward continuous instead of usc. 
\end{remark}
\begin{proposition}\label{charpositive}
Every $\varphi\in \mathcal{S}(\mathcal{X})$ is positive.
\end{proposition}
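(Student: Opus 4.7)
My plan is to proceed in two steps, first establishing positivity on the finite subcone $SC_b^1(\mathcal{X})_+$ via the classical square-root trick, then extending to all non-negative elements of $\mathcal{A}(\mathcal{X})$ by a spectral/invertibility argument.

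\textbf{Step 1 (positivity on $SC_b^1(\mathcal{X})_+$).} For $g \in SC_b^1(\mathcal{X})_+$ and any $\varepsilon>0$, the function $\sqrt{g+\varepsilon}$ again lies in $SC_b^1(\mathcal{X})_+$: it is $C^1$-smooth (as $g+\varepsilon\geq\varepsilon>0$), non-negative, bounded above by $\sqrt{\|g\|_\infty+\varepsilon}$, and the chain rule combined with Proposition~\ref{normaderivadaslip} gives
\[
\|\sqrt{g+\varepsilon}|_S \;\leq\; \frac{\|g|_S}{2\sqrt{\varepsilon}} \;<\; \infty.
\]
Since $\varphi$ is multiplicative, $\varphi(g+\varepsilon)=\varphi(\sqrt{g+\varepsilon})^2\geq 0$. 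Writing $\varphi(g+\varepsilon)=\varphi(g)+\varepsilon$ and letting $\varepsilon\to 0^+$ yields $\varphi(g)\geq 0$.

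\textbf{Step 2 (extension to $\mathcal{A}(\mathcal{X})$).} Let $f\in\mathcal{A}(\mathcal{X})$ with $f\geq 0$ and fix $\lambda>0$. The function $f+\lambda$ is smooth and bounded below by $\lambda$, so $1/(f+\lambda)$ is smooth, positive, and bounded. I would argue that $1/(f+\lambda)$ admits a decomposition as a difference of two elements of $SC_b^1(\mathcal{X})_+$, and hence lies in $\mathcal{A}(\mathcal{X})$. Granted this invertibility, multiplicativity and $\varphi(\mathds{1})=1$ give
\[
\varphi(f+\lambda)\cdot\varphi\!\left(\tfrac{1}{f+\lambda}\right) \;=\; \varphi(\mathds{1}) \;=\; 1,
\]
so $\varphi(f)+\lambda=\varphi(f+\lambda)\neq 0$, i.e.\ $\varphi(f)\neq -\lambda$. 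Since $\lambda>0$ is arbitrary, $\varphi(f)\geq 0$.

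\textbf{Main obstacle.} The crux is proving $1/(f+\lambda)\in\mathcal{A}(\mathcal{X})$. A natural candidate decomposition, using $f=f_1-f_2$ with $f_i\in SC_b^1(\mathcal{X})_+$ and writing
\[
\frac{1}{f+\lambda} \;=\; \frac{1}{\lambda} \;-\; \frac{f}{\lambda(f+\lambda)},
\]
runs into trouble because the differential of the second summand involves $df=df_1-df_2$, whose asymmetric $F$-norm may be infinite even when the individual $\|f_i|_S$ are finite. Indeed, on a non-reversible Finsler manifold the inclusion $\{h\in\mathcal{A}(\mathcal{X}):h\geq 0\}\supset SC_b^1(\mathcal{X})_+$ is strict in general, so $f$ itself need not be semi-Lipschitz and neither candidate quotient in the identity above need belong to $SC_b^1(\mathcal{X})_+$. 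A more delicate construction exploiting both the $F$-bounds on $df_1,df_2$ and the reverse-$F$-bounds implicit in the sign condition $f_1\geq f_2$ is therefore required; this is the place where the asymmetric framework developed in Section~\ref{secalg} is genuinely used.
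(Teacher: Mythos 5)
Your Step 2 is the right strategy---it is essentially the strategy of the paper's own proof---but your proposal stops exactly where the real work lies, so there is a genuine gap. The paper's proof of this proposition \emph{opens} by establishing precisely the claim you defer: $\mathcal{A}(\mathcal{X})$ is closed under bounded inversions, i.e.\ $f\geq 1$ implies $f^{-1}\in\mathcal{A}(\mathcal{X})$ (from which $1/(f+\lambda)\in\mathcal{A}(\mathcal{X})$ for $f\geq 0$ and $\lambda>0$ follows by rescaling). Everything downstream of that claim in your Step 2 is fine; the paper runs the same multiplicativity argument in a slightly stronger form, showing $\varphi(f)\in\overline{f(\mathcal{X})}$ for every $f\in\mathcal{A}(\mathcal{X})$ by applying bounded inversion to $(f-\varphi(f))^2$, which yields positivity at once. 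Your Step 1, though correct (the bound $\|\sqrt{g+\varepsilon}|_S\leq \|g|_S/(2\sqrt{\varepsilon})$ does follow from Proposition~\ref{normaderivadaslip} and positive homogeneity of $\|\cdot|_F$, the chain-rule factor $1/(2\sqrt{g+\varepsilon})$ being a positive scalar), is logically superfluous: elements of $SC_b^1(\mathcal{X})_+$ are non-negative by definition, so positivity on that cone says nothing about a general $f\in\mathcal{A}(\mathcal{X})$ with $f\geq 0$, and your Step 2, once completed, never uses it.

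As for the obstacle itself: the paper never attempts the decomposition $\tfrac{1}{f+\lambda}=\tfrac{1}{\lambda}-\tfrac{f}{\lambda(f+\lambda)}$ that you rightly distrust. Instead it shows that for $f\geq 1$ the single function $-f^{-1}$ already belongs to $SC_b^1(\mathcal{X})$: it is bounded, $C^1$, and $d(-f^{-1})(x)=f(x)^{-2}\,df(x)$ is a \emph{positive} scalar multiple of $df(x)$, so $\|d(-f^{-1})(x)|_F=f(x)^{-2}\|df(x)|_F\leq\|df(x)|_F$ and Proposition~\ref{normaderivadaslip} applies; hence $f^{-1}\in -SC_b^1(\mathcal{X})\subseteq\mathcal{A}(\mathcal{X})$. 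This sidesteps your worry about the reverse norm of $df_2$ entirely, because no splitting of $f^{-1}$ into two cone elements is performed. That said, the concern you raise---that $\sup_x\|df(x)|_F$ may be infinite for a general element of the span---is legitimate and is the one delicate point in the paper's short justification, since the displayed bound controls $\|-f^{-1}|_S$ by $\|f|_S$, which is finite only when $f$ itself is semi-Lipschitz. A fully airtight write-up should address this (e.g.\ by an argument adapted to $f=f_1-f_2$). But the structural answer to your ``main obstacle'' is the positive-multiple-of-the-derivative observation, not a new algebraic identity, and without some version of it your proof of the proposition is incomplete.
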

\begin{proof}
We will use the fact that $\mathcal{A}(\mathcal{X})$ is closed under bounded inversions, that is, whenever $f\in \mathcal{A}(\mathcal{X})$ satisfies $f\geq 1$, then $f^{-1}\in \mathcal{A}(\mathcal{X})$. Indeed, if we take $f\in \mathcal{A}(\mathcal{X})$ such that $f\geq 1$, then it is clear that $f^{-1}$ is of class $C^1$. Moreover, we have that the derivative of $-f^{-1}$ at a point $x$ equals $f(x)^{-2}df(x)$, so by Proposition~\ref{normaderivadaslip}, we have that the function $-f^{-1}$ is semi-Lipschitz. It follows that ${f^{-1}\in -SC_b^1(\mathcal{X})\subseteq \mathcal{A}(\mathcal{X})}$.

Using this, and the fact that $\varphi(\mathds{1})=1$, we can show that $\alpha=\varphi(f)$ belongs to $\overline{f(\mathcal{X})}^{|\cdot|}$ for every $f\in \mathcal{A}(\mathcal{X})$. Otherwise, there would exist $\varepsilon >0$ such that no sequence in $f(\mathcal{X})$ accumulates in $B(\alpha, \varepsilon)$, which implies $(f-\alpha)\geq\varepsilon$. Noting that both $(f-\alpha)$ and $(f-\alpha)^2$ belong to $\mathcal{A}(\mathcal{X})$, we arrive at a contradiction: 
$$ 1=\varphi((f-\alpha)^2)\cdot\varphi((f-\alpha)^{-2})=0.$$
\end{proof}
For each $x\in \mathcal{X}$, consider the \emph{evaluation mapping} $\delta_x:\mathcal{A}(\mathcal{X})\to \mathbb{R}$ defined by $\delta_x(f)=f(x)$ for all $f\in \mathcal{A}(\mathcal{X})$. Note that $\delta_x\in \mathcal{S}(\mathcal{X})$ for all $x\in \mathcal{X}$.
\begin{proposition}\label{dense}
The set of evaluation mappings $\{\delta_x \,:\,x\in \mathcal{X}\}$ is dense in $\mathcal{S}(\mathcal{X})$ for the topology of pointwise convergence in $\mathcal{A}(\mathcal{X})^*$ (which will be denoted by $\tau_p$).
\end{proposition}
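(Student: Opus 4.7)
The plan is to mimic the classical Gelfand-type density argument for characters of a commutative unital Banach algebra, adapting it to the extended asymmetric framework. Fix $\varphi\in\mathcal{S}(\mathcal{X})$ and a basic $\tau_p$-neighborhood of $\varphi$ of the form
$$U=\{\psi\in\mathcal{A}(\mathcal{X})^*\,:\,|\psi(f_i)-\varphi(f_i)|<\varepsilon,\ i=1,\ldots,n\},$$
for some $f_1,\ldots,f_n\in\mathcal{A}(\mathcal{X})$ and some $\varepsilon>0$. The whole task reduces to producing a point $x\in\mathcal{X}$ with $\delta_x\in U$, i.e.\ with $|f_i(x)-\varphi(f_i)|<\varepsilon$ for every $i$; this is exactly the same reduction used in the symmetric Myers–Nakai arguments of \cite{GJR-13}.

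Suppose, for contradiction, that no such $x$ exists. Then at every $x\in\mathcal{X}$ there is some index $i$ with $(f_i(x)-\varphi(f_i))^2\geq\varepsilon^2$. Define
$$g:=\sum_{i=1}^n\bigl(f_i-\varphi(f_i)\mathds{1}\bigr)^2.$$
Since $\mathcal{A}(\mathcal{X})$ is a unital commutative algebra containing the constants (Definition~\ref{algebrafinsler} and Proposition~\ref{conicsemiringtoalgebra}), we have $g\in\mathcal{A}(\mathcal{X})$, and by construction $g\geq\varepsilon^2$ on $\mathcal{X}$. Applying the bounded-inversion argument from the proof of Proposition~\ref{charpositive} to $\varepsilon^{-2}g\geq 1$ shows $g^{-1}\in\mathcal{A}(\mathcal{X})$. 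On the other hand, linearity and multiplicativity of $\varphi$, together with $\varphi(\mathds{1})=1$, yield
$$\varphi(g)=\sum_{i=1}^n\varphi\bigl(f_i-\varphi(f_i)\mathds{1}\bigr)^2=\sum_{i=1}^n\bigl(\varphi(f_i)-\varphi(f_i)\bigr)^2=0,$$
whence $1=\varphi(\mathds{1})=\varphi(g\cdot g^{-1})=\varphi(g)\varphi(g^{-1})=0$, a contradiction. Hence some $\delta_x$ lies in $U$, and since $U$ was an arbitrary basic $\tau_p$-neighborhood of $\varphi$, the set $\{\delta_x:x\in\mathcal{X}\}$ is $\tau_p$-dense in $\mathcal{S}(\mathcal{X})$.

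The only substantive difficulty in transporting this proof to the asymmetric setting is keeping all constructed functions inside $\mathcal{A}(\mathcal{X})$. Squares and finite sums are safe because $\mathcal{A}(\mathcal{X})$ is a genuine algebra (this is precisely the reason we passed from the conic-semiring $SC_b^1(\mathcal{X})_+$ to its linear span via Proposition~\ref{conicsemiringtoalgebra}); the delicate step is the inversion, since there is no a priori reason a bounded-below element of the cone of smooth semi-Lipschitz functions should have its reciprocal again semi-Lipschitz in the asymmetric norm. The point, already handled in the proof of Proposition~\ref{charpositive}, is that the differential of $-g^{-1}$ equals $g^{-2}\,dg$, so Proposition~\ref{normaderivadaslip} forces $-g^{-1}\in SC_b^1(\mathcal{X})$ and hence $g^{-1}\in\mathcal{A}(\mathcal{X})$. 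Beyond this, the rest of the argument is a direct translation of the classical Gelfand scheme.
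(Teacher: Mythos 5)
Your proof is correct and follows essentially the same route as the paper: the same reduction to a basic $\tau_p$-neighborhood, the same auxiliary function $g=\sum_{i=1}^n(f_i-\varphi(f_i)\mathds{1})^2\geq\varepsilon^2$ with $\varphi(g)=0$. The only difference is cosmetic: the paper derives the contradiction by invoking the positivity of $\varphi$ (Proposition~\ref{charpositive}) applied to $g-\varepsilon^2\geq 0$, whereas you inline the bounded-inversion and multiplicativity argument that underlies that very proposition, so both finishes rest on the same facts.
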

\begin{proof}

Let $\varphi\in \mathcal{S}(\mathcal{X})$, and consider a basic $\tau_p$-neighborhood of $\varphi$:
$$W=\{\psi\in \mathcal{S}(\mathcal{X})\,:\,|\varphi(f_i)-\psi(f_i)|<\varepsilon\,\,\forall i\in \{1,...,n\}\},$$
\noindent with $\varepsilon>0$, $n\in \mathbb{N}$ and $f_1,...,f_n\in \mathcal{A}(\mathcal{X})$. Suppose that $\delta_x\notin W$ for every $x\in \mathcal{X}$. Then, the function ${g=\sum_{i=1}^{n}(f_i-\varphi(f_i))^2}$ satisfies $g\geq \varepsilon^2$, but $\varphi(g)=0$, which contradicts the positivity of $\varphi$, as ${\varphi(g-\varepsilon^2)=-\varepsilon^2<0}$, since $\varphi(\mathds{1})=1$.
\end{proof}

From this point forward, we will assume $(\mathcal{X},F)$ to be a connected, forward complete and second countable Finsler manifold with Finsler distance $d_\mathcal{X}$. Consider the map $\delta:\mathcal{X}\to (\mathcal{S}(\mathcal{X}),\tau_p)$ associating to each $x\in \mathcal{X}$ the evaluation functional $\delta_x$. It is easy to check that this map is continuous, as every $f\in \mathcal{A}(\mathcal{X})$ is continuous in $\mathcal{X}$. Moreover, we can show that $\delta$ is an embedding.
\begin{proposition}
 The mapping $\delta^{-1}:\delta(\mathcal{X}) \to \mathcal{X}$ that maps each $\delta_x$ to its corresponding $x\in \mathcal{X}$ is $\tau_p$ continuous.
\end{proposition}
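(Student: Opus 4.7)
The plan is to show that $\delta^{-1}$ is continuous at each $\delta_x \in \delta(\mathcal{X})$. Given any $\varepsilon > 0$, it suffices to construct a $\tau_p$-open neighborhood $W$ of $\delta_x$ in $\mathcal{S}(\mathcal{X})$ whose preimage under $\delta$ lies inside the forward ball $B_{d_\mathcal{X}}(x,\varepsilon)$, since such balls form a neighborhood base of $x$ in the manifold topology.

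The construction hinges on producing a single function $g \in \mathcal{A}(\mathcal{X})$ that quantitatively separates $x$ from the complement of $B_{d_\mathcal{X}}(x,\varepsilon)$. First I would consider the truncated distance
\[
f(y) := \min\{d_\mathcal{X}(x,y),\; \varepsilon\},
\]
which is bounded by $\varepsilon$, non-negative, and $1$-semi-Lipschitz: the semi-Lipschitz property follows from Proposition~\ref{dist} combined with the elementary fact (by a short case analysis on which branch realizes the minimum) that the pointwise minimum of two $L$-semi-Lipschitz real-valued functions is again $L$-semi-Lipschitz. Applying Corollary~\ref{smoothslip} with uniform error $\varepsilon/4$ and slack $r=1$ yields a $C^1$-smooth semi-Lipschitz function $g$ satisfying $\|g-f\|_\infty \leq \varepsilon/4$ and $\|g|_S \leq 2$. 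Hence $g \in SC_b^1(\mathcal{X}) \subset \mathcal{A}(\mathcal{X})$, where the inclusion uses the observation made right after Definition~\ref{algebrafinsler} that $SC_b^1(\mathcal{X})$ and $SC_b^1(\mathcal{X})_+$ have the same linear span.

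Setting $W := \{\psi \in \mathcal{S}(\mathcal{X}) : |\psi(g) - g(x)| < \varepsilon/4\}$ produces a basic $\tau_p$-open neighborhood of $\delta_x$. If $y \in \mathcal{X}$ satisfies $\delta_y \in W$, then $g(y) < g(x) + \varepsilon/4 \leq \varepsilon/2$ (using $g(x) \leq f(x)+\varepsilon/4 = \varepsilon/4$). On the other hand, any $y$ with $d_\mathcal{X}(x,y) \geq \varepsilon$ satisfies $f(y)=\varepsilon$ and hence $g(y) \geq 3\varepsilon/4 > \varepsilon/2$, a contradiction. Therefore $d_\mathcal{X}(x,y) < \varepsilon$, giving $\delta^{-1}(\delta_y) = y \in B_{d_\mathcal{X}}(x,\varepsilon)$.

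There is no deep obstacle here; the main subtlety is staying within the cone-based framework, since the smoothing procedure of Corollary~\ref{smoothslip} need not return a non-negative function and hence need not deliver an element of $SC_b^1(\mathcal{X})_+$ directly. This is resolved, as already noted, by passing to the ambient algebra $\mathcal{A}(\mathcal{X}) = \mathrm{span}(SC_b^1(\mathcal{X})_+) = \mathrm{span}(SC_b^1(\mathcal{X}))$, which makes every bounded smooth semi-Lipschitz function automatically available as a separating functional.
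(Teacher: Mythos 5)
Your argument is correct and follows essentially the same route as the paper: approximate the truncated distance function $\min\{d_\mathcal{X}(x,\cdot),\varepsilon\}$ by a $C^1$-smooth semi-Lipschitz $g\in\mathcal{A}(\mathcal{X})$ via Corollary~\ref{smoothslip}, then use $\psi\mapsto\psi(g)$ as the separating test functional. The only difference is cosmetic (an explicit $\varepsilon$--neighborhood formulation versus the paper's net argument, and truncation at $\varepsilon$ rather than at $1$), and your quantitative bookkeeping is sound.
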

\begin{proof}
Consider a net $\left( \delta_{x_\lambda}\right)\subset \delta (\mathcal{X})$ converging to $\delta_{x_0}$ in the pointwise topology of $\mathcal{A}(\mathcal{X})^*$, and consider the function $f(x)=\min\{d_\mathcal{X}(x_0,x),1\}$, which is semi-Lipschitz and bounded, but it fails to be of class $C^1$. To remedy this, for $\varepsilon>0$, consider a smooth approximation $g$ of $f$ obtained using Corollary~\ref{smoothslip}, such that $|\min\{d_\mathcal{X}(x_0,x),1\}-g(x)|\leq \varepsilon $. Then, if we evaluate the net $\left( \delta_{x_\lambda}\right)$ on the function $g\in \mathcal{A}(\mathcal{X})$, we obtain that 
$ \delta_{x_\lambda}(g)=g(x_\lambda)$ converges (in absolute value) to $\delta_{x_0}(g)=g(x_0)\leq \varepsilon$, and since  ${|\min\{d_\mathcal{X}(x_0,x),1\}-g(x)|\leq \varepsilon}$, we conclude that $d_\mathcal{X}(x_0,x_\lambda)$ converges to $0$.
\end{proof}
\begin{proposition}\label{vecnum}
Let $\varphi\in S(\mathcal{X})$. The following are equivalent:
\begin{enumerate}
    \item $\varphi$ has a countable neighborhood basis in $S(\mathcal{X})$ for the pointwise topology $\tau_p$ of $\mathcal{A}(\mathcal{X})^*$.
    \item There exists $x\in \mathcal{X}$ such that $\varphi=\delta_x$.
\end{enumerate}
\end{proposition}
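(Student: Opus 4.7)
The plan is to prove the two implications separately: $(2)\Rightarrow (1)$ by exhibiting an explicit countable neighborhood basis of $\delta_x$, and $(1)\Rightarrow (2)$ by extracting a forward-Cauchy sequence in $\mathcal{X}$ that realizes $\varphi$ via forward completeness.

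For $(2)\Rightarrow (1)$: since $\mathcal{X}$ is second countable, fix a countable dense subset $\{p_j\}_{j\in\mathbb{N}}\subset \mathcal{X}$, and for each triple $(j,k,\ell)\in \mathbb{N}^3$ use Corollary~\ref{smoothslip} to obtain $h_{j,k,\ell}\in \mathcal{A}(\mathcal{X})$ approximating $\min\{d_\mathcal{X}(p_j,\cdot),1/k\}$ within $1/\ell$ in supremum norm. The candidate local basis is
\[
V_{n,m}=\bigcap_{(j,k,\ell)\le n}\bigl\{\psi\in\mathcal{S}(\mathcal{X})\,:\,|\psi(h_{j,k,\ell})-h_{j,k,\ell}(x)|<1/m\bigr\}.
\]
To see that every basic $\tau_p$-neighborhood $U=\{\psi:|\psi(f_i)-f_i(x)|<\varepsilon,\,i\le N\}$ of $\delta_x$ contains some $V_{n,m}$, one uses that for any $f_i\in \mathcal{A}(\mathcal{X})$ the value $f_i(x)$ is determined by the local behavior of $f_i$ near $x$, which in turn is captured by the $h_{j,k,\ell}$'s through positivity (Proposition~\ref{charpositive}) and multiplicativity of $\psi$; the argument closely parallels the one used in the proof of Proposition~\ref{dense}.

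For $(1)\Rightarrow (2)$: pick a countable nested basis $\{V_n\}$ of $\varphi$ and, by density (Proposition~\ref{dense}), choose $x_n\in\mathcal{X}$ with $\delta_{x_n}\in V_n$, so $f(x_n)\to \varphi(f)$ for every $f\in \mathcal{A}(\mathcal{X})$. The goal is to show $(x_n)$ is forward-Cauchy in $(\mathcal{X},d_\mathcal{X})$; forward-completeness then gives $x_n\to x_*\in \mathcal{X}$, and continuity of every $f\in\mathcal{A}(\mathcal{X})$ (as a difference of bounded $C^1$ functions, hence continuous on the Finsler manifold) yields $\varphi(f)=f(x_*)$ for all $f$, i.e.\ $\varphi=\delta_{x_*}$.

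The main obstacle is establishing the forward-Cauchy property of $(x_n)$. I plan to argue by contradiction: if not, extract indices $p_1\le q_1\le p_2\le q_2\le \cdots$ with $d_\mathcal{X}(x_{p_k},x_{q_k})\ge \varepsilon$ for a fixed $\varepsilon>0$, and construct a single $g\in\mathcal{A}(\mathcal{X})$ whose values on $(x_{p_k})$ and $(x_{q_k})$ cannot share the same limit, contradicting $g(x_n)\to\varphi(g)$. After a further extraction using second countability and $\sigma$-compactness of $\mathcal{X}$ (together with Hopf-Rinow forward-ball compactness given by forward completeness) one may assume the two subsequences are well separated, and then build $g$ as a uniformly convergent weighted series of Corollary~\ref{smoothslip}-smoothings of the semi-Lipschitz functions $\min\{d_\mathcal{X}(x_{p_k},\cdot),\varepsilon\}$, with coefficients chosen so that $g(x_{q_k})-g(x_{p_k})\ge c>0$ uniformly in $k$. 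The delicate point is controlling the $C^1$-regularity and the semi-Lipschitz constant of the series so that $g\in \mathcal{A}(\mathcal{X})$; this relies on the uniform semi-Lipschitz bound in Corollary~\ref{smoothslip} and a judicious choice of weights to enforce summability.
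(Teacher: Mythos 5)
Your proposal takes a genuinely different route from the paper in both directions, and in both there is a real gap.

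In $(2)\Rightarrow(1)$, the paper's argument is purely topological: it takes a countable neighborhood basis $(V_n)$ of $x$ in $\mathcal{X}$ and shows that the closures $\mathrm{cl}_{S(\mathcal{X})}\delta(V_n)$ form a countable basis at $\delta_x$, using only that $\delta$ is an embedding with dense image and that $\tau_p$ is regular; no explicit functions are needed. Your function-theoretic alternative is not carried out at its crucial step: the passage from ``$\psi$ agrees with $\delta_x$ on the $h_{j,k,\ell}$ up to $1/m$'' to ``$|\psi(f_i)-f_i(x)|<\varepsilon$'' is exactly the content of the claim, and it does not parallel Proposition~\ref{dense}, whose argument runs in the opposite direction. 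To make it work one would use $\psi\bigl((f-f(x)\mathds{1})^2\bigr)=(\psi(f)-f(x))^2$ together with a pointwise domination of $(f-f(x))^2$ by a positive combination of your localizers plus a small constant --- and such a domination fails for your family: the functions $\min\{d_\mathcal{X}(p_j,\cdot),1/k\}$ only confine $\psi$ to small \emph{forward} balls around $x$, while in a non-reversible manifold a small forward ball $B_d(x,r)$ can contain points $y$ with $d_\mathcal{X}(y,x)$ large (this occurs in Example~\ref{finslerindex0} for $x$ large), where $f\in\mathcal{A}(\mathcal{X})$ is controlled only by the symmetrized distance. You would need to add backward localizers such as smoothings of $1/k-\min\{d_\mathcal{X}(\cdot,p_j),1/k\}$, which do lie in $SC_b^1(\mathcal{X})_+$.

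In $(1)\Rightarrow(2)$, proving that $(x_n)$ is forward-Cauchy is more than is needed, and your construction of the contradicting function does not close. For $g=\sum_k a_k g_k$ to have finite semi-Lipschitz constant and converge uniformly you need $\sum_k a_k<\infty$, hence $a_k\to 0$; but then the $k$-th gap $a_k\bigl(g_k(x_{q_k})-g_k(x_{p_k})\bigr)\approx a_k\varepsilon\to 0$, while the cross terms $a_j\bigl(g_j(x_{q_k})-g_j(x_{p_k})\bigr)$ for $j\neq k$ can each be as negative as $-a_j\varepsilon$, so no uniform lower bound $g(x_{q_k})-g(x_{p_k})\geq c>0$ is attainable. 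The paper sidesteps all of this with a dichotomy: if $(x_n)$ has a convergent subsequence $x_{n_k}\to x_*$, then $\delta_{x_{n_k}}\to\delta_{x_*}$ pointwise and $\varphi=\delta_{x_*}$ immediately; otherwise Hopf--Rinow (via forward completeness) makes a smoothing $g$ of $d_\mathcal{X}(p,\cdot)$ proper, so $g(x_n)\to+\infty$, and a single composition $\theta\circ g$ with $\theta$ a $C^1$ function of bounded derivative oscillating between $0$ and $1$ along a sparse subsequence of the values $g(x_{n_k})$ yields one bounded element of $SC_b^1(\mathcal{X})_+$ on which $\delta_{x_n}(\cdot)$ cannot converge. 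I recommend replacing your Cauchy argument with this dichotomy.
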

\begin{proof}
$(1)\implies(2)$ Since $(\mathcal{X},F)$ is forward complete, the Hopf-Rinow theorem (see Theorem 6.6.1 of \cite{BCS}) asserts that forward bounded and closed subsets of $\mathcal{X}$ are compact, which implies that the function $f(x)=d_\mathcal{X}(p,x)$ is proper for any $p\in \mathcal{X}$ (recall that a function is called \emph{proper} if the inverse image of every compact set is compact). By taking a $C^1$-smooth semi-Lipschitz approximation of $f$ (using Corollary~\ref{smoothslip}), we obtain a proper semi-Lipschitz function $g\in C^1(\mathcal{X})$. Suppose now that $\varphi\in S(\mathcal{X})\setminus \delta(\mathcal{X})$ has a countable neighborhood basis in $S(\mathcal{X})$ for the pointwise topology. Then, by Proposition~\ref{dense}, there exists a sequence $(x_n)_{n\in \mathbb{N}}\subset \mathcal{X}$, with no convergent subsequences, such that $\delta_{x_n}\to \varphi$ in the pointwise topology. Since $g$ is proper, $\lim g(x_n)=+\infty$, so there exists a subsequence $x_{n_k}$ such that $g(x_{n_k})+1<g(x_{n_{k+1}})$. Now we can choose a $C^1$-smooth function $\theta: \mathbb{R} \to  [0, 1]$, with bounded derivative (so that $\theta \circ g\in SC_b^1(\mathcal{X})_+$), such that $\theta (g(x_{n_{2k+1}}))=1$ and $\theta (g(x_{n_{2k}}))=0$  for every $k\in \mathbb{N}$, which is a contradiction, as the sequence $\delta_{x_{n_k}}(g)$ is not convergent.\\
$(2)\implies(1)$ If $\varphi=\delta_x$ for some $x\in \mathcal{X}$, then consider any countable neighborhood basis $(V_n)$ of $x$ in $\mathcal{X}$. Then the family of closures $\{\mathrm{cl}_{S(\mathcal{X})}\delta(V_n)\,:\,n\in \mathbb{N}\}$ is a countable neighborhood basis of $\delta_x$ in $S(\mathcal{X})$.
\end{proof}
\begin{lemma}\label{metric}
For each $x,y\in \mathcal{X}$, 
$$\min\{d_\mathcal{X}(x,y),1\}\leq\|\delta_y-\delta_x|^*\leq d_\mathcal{X}(x,y),$$
\noindent where $\|\cdot|^*$ is the conic hemi-norm of $\mathcal{A}(\mathcal{X})^*$.
\end{lemma}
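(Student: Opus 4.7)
The plan is to prove the two inequalities separately, with the upper bound being essentially immediate and the lower bound requiring a careful smoothing construction.

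For the upper bound, I would unfold the definition of the dual cone norm:
$$\|\delta_y - \delta_x|^* = \sup\bigl\{f(y) - f(x)\ :\ f\in \mathcal{A}(\mathcal{X}),\ \|f|\leq 1\bigr\}.$$
The condition $\|f|\leq 1$ forces $f$ to lie in the finite subcone of $\mathcal{A}(\mathcal{X})$, which by construction is $SC_b^1(\mathcal{X})_+$, and in particular $\|f|_S\leq 1$. By Proposition~\ref{normaderivadaslip}, such an $f$ is $1$-semi-Lipschitz for the Finsler quasi-metric $d_\mathcal{X}$, so $f(y) - f(x) \leq d_\mathcal{X}(x,y)$. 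Passing to the supremum gives the upper bound.

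For the lower bound, the plan is to exhibit an almost-extremal admissible test function. The natural candidate is the truncated distance $f_0(z) := \min\{d_\mathcal{X}(x,z),1\}$, which is non-negative, bounded by $1$, satisfies $f_0(x) = 0$ and $f_0(y) = \min\{d_\mathcal{X}(x,y),1\}$, and is $1$-semi-Lipschitz (a direct case analysis using Proposition~\ref{dist} and the triangular inequality for the quasi-metric shows that composing $d_\mathcal{X}(x,\cdot)$ with the non-decreasing $1$-Lipschitz map $t\mapsto \min\{t,1\}$ preserves the semi-Lipschitz constant). Since $f_0$ need not be $C^1$, I would apply Corollary~\ref{smoothslip} (available thanks to second countability of $\mathcal{X}$) with a constant tolerance $\varepsilon>0$ to obtain a $C^1$-smooth $g:\mathcal{X}\to\mathbb{R}$ with $|g - f_0|\leq \varepsilon$ and $\|g|_S \leq 1 + \varepsilon$.

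The delicate point, and the only real technicality, is that $g$ may take small negative values, preventing membership in $SC_b^1(\mathcal{X})_+$. I would resolve this by the constant shift $\tilde g := g + \varepsilon$, which satisfies $\tilde g \geq 0$, $\|\tilde g\|_\infty \leq 1 + 2\varepsilon$ and $\|\tilde g|_S = \|g|_S \leq 1 + \varepsilon$, and hence belongs to $SC_b^1(\mathcal{X})_+$ with $\|\tilde g|\leq 1 + 2\varepsilon$. Rescaling, $h := \tilde g/(1+2\varepsilon)$ is an admissible competitor in the dual-norm supremum and
$$h(y) - h(x) \;=\; \frac{g(y) - g(x)}{1 + 2\varepsilon} \;\geq\; \frac{\min\{d_\mathcal{X}(x,y),1\} - 2\varepsilon}{1 + 2\varepsilon}.$$
Letting $\varepsilon \to 0$ yields $\|\delta_y - \delta_x|^* \geq \min\{d_\mathcal{X}(x,y),1\}$, which combined with the upper bound completes the proof. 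The main (mild) obstacle is keeping track of the non-negativity constraint imposed by the definition of $SC_b^1(\mathcal{X})_+$; everything else reduces to bookkeeping of the shift and the rescaling.
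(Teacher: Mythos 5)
Your proposal is correct and follows essentially the same route as the paper: the upper bound from the semi-Lipschitz estimate $f(y)-f(x)\leq \|f|\,d_\mathcal{X}(x,y)$ on the finite subcone, and the lower bound by smoothing the truncated distance $\min\{d_\mathcal{X}(x,\cdot),1\}$ via Corollary~\ref{smoothslip}, shifting by $\varepsilon$ to land in $SC_b^1(\mathcal{X})_+$, rescaling, and letting $\varepsilon\to 0$. Your bookkeeping of the constants (normalizing by $1+2\varepsilon$) is in fact slightly more careful than the paper's.
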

\begin{proof}
First, let us note that $\delta_y-\delta_x$ belongs to the dual cone $\mathcal{A}(\mathcal{X})^*$, since for any $f\in\mathcal{A}(\mathcal{X})$ with finite hemi-norm we have $(\delta_y-\delta_x)(f)=f(y)-f(x)\leq \|f|d_\mathcal{X}(x,y)$. It follows that ${\|\delta_y-\delta_x|^*\leq d_\mathcal{X}(x,y)}$. On the other hand, fix $x,y\in\mathcal{X}$ and for $\varepsilon>0$, consider the function $f(u)=\min\{d_\mathcal{X}(x,u),1\}$ and apply Corollary~\ref{smoothslip} to obtain a $C^1$-smooth semi-Lipschitz approximation $g$ of $f$, such that $|g(u)-f(u)|\leq \varepsilon$ for all $u\in \mathcal{X}$ and $\|g|_S\leq \|f|_S+\varepsilon=1+\varepsilon$. Since $g(u)\geq f(u) -\varepsilon$ for all $u\in \mathcal{X}$, it follows that the function $\Tilde{g}:=g+\varepsilon$ satisfies $\Tilde{g}\geq f\geq 0$, which implies $\Tilde{g}\in SC_b^1(\mathcal{X})_+$. By multiplying $\Tilde{g}$ by $(1+\varepsilon)^{-1}$, we ensure that $\|(1+\varepsilon)^{-1}\Tilde{g}|\leq 1$, so 
\begin{align*}
  \|\delta_y-\delta_x|^* &\geq (\delta_y -\delta_x)((1+\varepsilon)^{-1}\Tilde{g})\\
  &=(1+\varepsilon)^{-1}(\Tilde{g}(y)-\Tilde{g}(x))   \\ 
  &\geq (1+\varepsilon)^{-1}(f(y)-\Tilde{g}(x))\\
 &\geq(1+\varepsilon)^{-1}(f(y)-f(x)-\varepsilon)\\
 &=(1+\varepsilon)^{-1}(\min\{d_\mathcal{X}(x,y),1\}-\varepsilon),
\end{align*}
\noindent for every $\varepsilon>0$, and therefore $\|\delta_y-\delta_x|^*\geq \min\{d_\mathcal{X}(x,y),1\}$.
\end{proof}
We are ready to prove the isomorphic version of our main result. 
\begin{theorem}[Non reversible Myers-Nakai Theorem, isomorphic version]\label{mnasymm}
Let $(\mathcal{X},F_\mathcal{X})$ and $(\mathcal{Y},F_\mathcal{Y})$ be connected, second countable and forward complete Finsler manifolds. Then, for any ${T:\mathcal{A}(\mathcal{Y})\to \mathcal{A}(\mathcal{X})}$, the following are equivalent:

\begin{enumerate}
    \item T is an extended asymmetric normed algebra isomorphism.
    \item There exists a bi-semi-Lipschitz diffeomorphism  $h:(\mathcal{X},d_\mathcal{X})\to (\mathcal{Y},d_\mathcal{Y})$ such that $Tf=f\circ h$ for all $f\in \mathcal{A}(\mathcal{Y})$, and the semi-Lipschitz constants of $h$ and $h^{-1}$ are bounded by $\|T|$ and $\|T^{-1}|$, respectively. 
\end{enumerate} 
\end{theorem}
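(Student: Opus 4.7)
The plan is to prove the two directions separately, with $(2)\Rightarrow(1)$ being routine and $(1)\Rightarrow(2)$ following a Gelfand--style scheme powered by the machinery developed in Sections \ref{secprelim}--\ref{secmn}.

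For $(2)\Rightarrow(1)$: if $h:\mathcal{X}\to\mathcal{Y}$ is a bi-semi-Lipschitz diffeomorphism and $f\in SC_b^1(\mathcal{Y})_+$, then $f\circ h$ is $C^1$, non-negative, bounded by $\|f\|_\infty$, and semi-Lipschitz with constant at most $\|f|_S\cdot \mathrm{SLIP}(h)$ by the chain rule and Proposition~\ref{normaderivadaslip}. Hence $Tf=f\circ h$ sends $SC_b^1(\mathcal{Y})_+$ into $SC_b^1(\mathcal{X})_+$, extends linearly to $\mathcal{A}(\mathcal{Y})$, is multiplicative by construction, and is bi-continuous in the extended sense of Definition~\ref{continuityextnorm}; the same reasoning applied to $h^{-1}$ yields bijectivity and the reverse bound.

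For $(1)\Rightarrow(2)$ the plan is to construct $h$ from the transpose operator $T^{*}:\mathcal{S}(\mathcal{X})\to\mathcal{S}(\mathcal{Y})$ defined by $T^{*}\varphi=\varphi\circ T$. First I would check that $T^*$ is well-defined: composition with $T$ preserves non-vanishing, linearity and multiplicativity, and the continuity bound $\|Tf|\leq \|T|\|f|$ yields $\|\varphi\circ T|^{*}\leq \|T|\,\|\varphi|^{*}$, giving forward continuity. Next, $T^*$ is a $\tau_p$-homeomorphism onto $\mathcal{S}(\mathcal{Y})$ with inverse $(T^{-1})^{*}$, since pointwise convergence is manifestly preserved by precomposition with a fixed operator. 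The crucial step is then to invoke Proposition~\ref{vecnum}: evaluation functionals are characterized as those elements of $\mathcal{S}(\cdot)$ admitting a countable $\tau_p$-neighborhood basis, and this property is preserved under homeomorphisms. Second countability of $\mathcal{X}$ and $\mathcal{Y}$ therefore forces $T^{*}$ to restrict to a bijection $\delta(\mathcal{X})\to\delta(\mathcal{Y})$, which defines $h:\mathcal{X}\to\mathcal{Y}$ via $T^{*}\delta_x=\delta_{h(x)}$, and unfolding this identity gives $Tf=f\circ h$ for every $f\in\mathcal{A}(\mathcal{Y})$.

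With $h$ in hand, the metric regularity comes from Lemma~\ref{metric}. For any $x,y\in\mathcal{X}$ and $f\in\mathcal{A}(\mathcal{Y})$ with $\|f|\leq 1$,
\[
f(h(y))-f(h(x))=Tf(y)-Tf(x)\leq \|Tf|_S\, d_\mathcal{X}(x,y)\leq \|T|\, d_\mathcal{X}(x,y),
\]
so taking the supremum yields $\|\delta_{h(y)}-\delta_{h(x)}|^{*}\leq \|T|\, d_\mathcal{X}(x,y)$; Lemma~\ref{metric} then gives $\min\{d_\mathcal{Y}(h(x),h(y)),1\}\leq \|T|\, d_\mathcal{X}(x,y)$. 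On balls of radius $1/\|T|$ the minimum is attained by the first term, and Proposition~\ref{locslip} promotes this local estimate to a global $\|T|$-semi-Lipschitz bound for $h$; the same argument with $T^{-1}$ controls $h^{-1}$. Smoothness follows by pull-back: around any $p\in\mathcal{X}$, multiply the local coordinates at $h(p)$ by a smooth compactly supported cutoff to obtain representatives in $SC_b^1(\mathcal{Y})\subset\mathcal{A}(\mathcal{Y})$; since $Tf=f\circ h$ is $C^1$ on $\mathcal{X}$, the coordinates of $h$ are $C^1$ near $p$, and the symmetric argument for $h^{-1}$ delivers the diffeomorphism conclusion.

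The step I expect to be most delicate is verifying that $T^{*}$ genuinely maps structure spaces to structure spaces and behaves as a $\tau_p$-homeomorphism. Although linearity and multiplicativity transfer transparently, the forward continuity of $\varphi\circ T$ in the asymmetric dual has to be reconciled with the requirement that $\varphi\circ T$ be positive so that Proposition~\ref{dense} can be applied to obtain the density of evaluations that powers the countable-basis argument. This relies both on the bootstrap from forward to two-sided control afforded by $\mathds{1}\in\mathcal{A}(\mathcal{X})$ and $\varphi(\mathds{1})=1$ (Remark~\ref{caractcont}), and on the closure of $\mathcal{A}$ under bounded inversions used in Proposition~\ref{charpositive}. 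These auxiliary asymmetric arguments have no analogue in the reversible setting of Theorem~\ref{mnreversible}, where the linear structure of $C_b^1$ makes the adjoint construction essentially automatic; the whole point of Section~\ref{secalg} is to set up the conic-semiring framework so that these adjustments go through, and this is precisely where that payoff is cashed in.
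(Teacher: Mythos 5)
Your proposal is correct and follows essentially the same route as the paper: the transpose $T^*$ restricted to structure spaces, the countable-neighborhood-basis characterization of evaluations (Proposition~\ref{vecnum}) to obtain $h$ with $Tf=f\circ h$, Lemma~\ref{metric} plus Proposition~\ref{locslip} for the semi-Lipschitz bounds, and pull-back of cut-off coordinate functions for smoothness. The only cosmetic difference is in the localization step, where you force the minimum in Lemma~\ref{metric} via $\|T|\,d_\mathcal{X}(x,y)<1$ on small balls while the paper intersects symmetrized balls around $p$ and $h(p)$ to bound both distances by $1$ directly; both work.
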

\begin{proof}
$(2)\implies(1)$ is direct. For the remaining implication, assume that ${T:\mathcal{A}(\mathcal{Y})\to \mathcal{A}(\mathcal{X})}$ is an extended asymmetric normed algebra isomorphism. Consider the dual map ${T^*:\mathcal{A}(\mathcal{X})^*\to \mathcal{A}(\mathcal{Y})^*}$ defined in the usual way, so that $\langle T^*\varphi,f\rangle =\langle \varphi,Tf\rangle$ for all $f\in \mathcal{A}(\mathcal{Y})$ and $\varphi\in \mathcal{A}(\mathcal{X})^*$. Clearly, $T^*$ maps continuous multiplicative functionals into continuous multiplicative functionals,
and since $T^*$ is by definition $\tau_p$ to $\tau_p$ continuous, it follows that $T^*$ restricts to an homeomorphism between $\mathcal{S}(\mathcal{X})$ and $S(\mathcal{Y})$. Consider now the natural embeddings $\delta_\mathcal{X}:\mathcal{X}\to \mathcal{S}(\mathcal{X})$ and $\delta_\mathcal{Y}:\mathcal{Y}\to \mathcal{S}(\mathcal{Y})$. By Lemma~\ref{vecnum}, ${T^*(\delta_\mathcal{X}(\mathcal{X}))=\delta_{\mathcal{Y}}(\mathcal{Y})}$. Now, we can define 
$$h=(\delta_{\mathcal{Y}})^{-1}\circ T^* \circ \delta_\mathcal{X}\,:\mathcal{X}\to \mathcal{Y},$$
\noindent which is an homeomorphism from $\mathcal{X}$ onto $\mathcal{Y}$. Moreover, for any $f\in \mathcal{A}(\mathcal{Y})$, and any $x\in \mathcal{X}$ 
$$ Tf(x)=\langle \delta_x, Tf\rangle = \langle T^*\delta_x,f\rangle\\
    =\langle \delta_{h(x)}, f\rangle = f(h(x))=(f\circ h)(x).$$

Noting that any non negative and compactly supported $f\in C^1(\mathcal{Y})$ belongs to $\mathcal{A}(\mathcal{Y})$, we have that $Tf=f\circ h$ belongs to $\mathcal{A}(\mathcal{X})\subseteq C^1(\mathcal{X})$ for any such $f$. Using local charts, we can deduce that $h$ is smooth, and the same can be said about $h^{-1}$. To see that $h$ is bi-semi-Lipschitz, it suffices to prove that it is locally bi-semi-Lipschitz (see Proposition~\ref{locslip}). To this end, fix $p\in \mathcal{X}$. Consider the open neighborhood of $p$ ${U_p=B^s(p,\frac{1}{2})\cap h^{-1}(B^s(h(p),\frac{1}{2}))}$, where $B^s(q,r)$ denotes the ball of center $q$ and radius $r$ for the symmetrized Finsler distance ${d^s(z,w)=\max\{d_F(z,w),d_F(w,z)\}}$. Then, for any $x_1,x_2\in U_p$, we have $d_\mathcal{X}(x_1,x_2)<1$ and $d_{\mathcal{Y}}(h(x_1),h(x_2))<1$. Then, using Lemma~\ref{metric} we have that
\begin{align*}
    d_{\mathcal{Y}}(h(x_1),h(x_2))&\leq \|\delta_{h(x_2)}-\delta_{h(x_1)}|^*\\
    &=\|T^*\delta_{x_2}-T^*\delta_{x_1}|^*\\
    &\leq \|T^*|\|\delta_{x_2}-\delta_{x_1}|^*\\
    &\leq \|T^*|d_\mathcal{X}(x_1,x_2).
\end{align*}

The same argument can be used for $h^{-1}$, obtaining that $h$ is bi-semi-Lipschitz. Finally, a simple computation (the same as in the symmetric case) shows that $\|T^*|=\|T|$, so the proof is complete. 
\end{proof}

One can easily see that Theorem \ref{thm:main} follows as a direct corollary of Theorem~\ref{mnasymm}, as the norms of the isomorphisms $T$ and $T^{-1}$ give precise bounds for the semi-Lipschitz constants of $h$ and $h^{-1}$. This should be compared to Theorem~\ref{mnquasi} (as stated in \cite[Theorem~16]{GJR-13}), where the bounds obtained for $h$ and $h^{-1}$ depend on the indexes of symmetry of the involved manifolds (referred as constants of quasi-reversibility by the authors), and therefore, does not allow for an isometric result unless both indexes of symmetry are $1$ (that is, when both Finsler manifolds are reversible, see Theorem~\ref{mnreversible}). Therefore, Theorem~\ref{mnasymm} can be seen as an improvement of Theorem~\ref{mnquasi}, by improving the bounds obtained for the semi-Lipschitz constants of the diffeomorphism, as well as by extending the result to non-reversible Finsler manifolds. 
\subsection*{Acknowledgment}

This research has been partially supported by the Center of Mathematical Modeling
CMM, grant FB210005 BASAL funds for centers of excellence (ANID-Chile). 
A preliminary version of this work was presented at Seminario del Departamento de Matem\'atica Aplicada, Escuela de Ingenier\'ia Industrial, UNED (Spain) and at Seminario del Departamento Matem\'aticas Aplicadas y An\'alisis Matem\'atico, Universidad Complutense de Madrid (Spain). The author wishes to thank the organizers for their hospitality, as well as to Jes\'us A. Jaramillo, Estibalitz Durand-Cartagena and M. Isabel Garrido for their insightful comments and discussions on the subject, and to the referee for their suggestions, which greatly improved the presentation of the article.

 \bibliographystyle{plain} 
 \bibliography{MN.bib}

\end{document}